\documentclass[a4paper,12pt]{article}

\usepackage{amsmath,amssymb,amsthm,bm}
\usepackage{tikz}
\usepackage{hyperref}

\usepackage{amsmath,amssymb,amsthm,bm}
\usepackage{refcount}
\AtEndDocument{%
  \ifnum\getpagerefnumber{paper:body-end}>20\relax
    \PackageWarningNoLine{SVPTarget20}{%
Main text exceeds the CiC regular-paper limit of 20 pages}%
  \fi
}
\newtheorem{thm}{Theorem}[section]
\newtheorem{prop}[thm]{Proposition}
\newtheorem{lem}[thm]{Lemma}
\newtheorem{cor}[thm]{Corollary}

\theoremstyle{definition}
\newtheorem{rem}[thm]{Remark}

\newcommand{\re}{\mathbb R}
\newcommand{\zz}{\mathbb Z}

\newcommand{\dd}{\,d}
\newcommand{\ex}{\mathbb E}
\newcommand{\pr}{\mathbb P}

\newcommand{\TV}{\operatorname{TV}}

\newcommand{\SL}{\operatorname{SL}}
\newcommand{\GH}{\operatorname{GH}}

\title{Optimal Sample Exponents for Direct Discrete-Gaussian SVP Search on Haar Random Lattices}

\author{Masahiro Kaminaga\\
Tohoku Gakuin University, Sendai, Japan\\
\texttt{kaminaga@g.tohoku-gakuin.ac.jp}}
\date{}

\begin{document}

\maketitle

Keywords: 
lattice-based cryptography, shortest vector problem, approximate SVP, Gibbs distribution, random lattices, discrete Gaussian sampling

\begin{abstract}
We determine the optimal sample exponent for direct discrete Gaussian SVP search on Haar random unimodular lattices, counting zero outputs.
The output is one sampled vector, optionally divided by the greatest common divisor of its lattice coordinates.
For every fixed approximation factor $1\leq\gamma<\sqrt e$, the exponent is $\gamma^2/(2e)-\log\gamma$ in natural logarithmic units; it is zero for $\gamma\geq\sqrt e$.
For exact SVP this gives $0.2653689\ldots$ in base two.
The converse permits arbitrary positive widths chosen from the lattice and all previous outputs, and a fixed width attains every exponent above the threshold.
Aggarwal, Dadush, Regev, and Stephens-Davidowitz already give a width within a factor two of optimal for exact SVP on each lattice.
We determine the explicit Haar typical rate and show that primitive reduction preserves it.
An explicit finite dimensional converse controls all widths simultaneously, including recovery from long multiples; together with finite attainment bounds, it yields query guarantees in both directions.
The proof uses random lattice moments and a pointwise Gaussian bound optimized over the width.
We account for sampling error and separate sample requirements from generation costs in comparisons with the random lattice search of Pouly and Shen and later algorithms.
\end{abstract}

\section{Introduction}
\label{sec:introduction}

For a full rank lattice $L\subset\re^n$, let $\lambda_1(L)$ be the length of its shortest nonzero vector.
The $\gamma$ approximate shortest vector problem asks for a nonzero vector of length at most $\gamma\lambda_1(L)$, with exact SVP corresponding to $\gamma=1$.
SVP and discrete Gaussian sampling are central to lattice algorithms and cryptography
\cite{Ajtai96,MicciancioGoldwasser02,MicciancioRegev07,Regev09,Peikert16,AjtaiKumarSivakumar01,ADRS15,SD16}.

Pouly and Shen \cite{PoulyShen26} give a rigorous Gaussian search algorithm on Haar random lattices: generate samples at a width justified by a smoothing parameter bound, and select a short nonzero sample.
They obtain time $2^{0.63269n+o(n)}$ for exact SVP and $2^{n/2+o(n)}$ for $1.123$ approximate SVP.
Their result separates two questions: how many Gaussian outputs are needed, and what does generating them cost?

Width optimization itself has a classical basis.
Following Proposition 4.3, Aggarwal, Dadush, Regev, and Stephens-Davidowitz (ADRS) \cite{ADRS15} observe that the width $s_L=1/\eta_1(L^*)$, characterized by $\theta_L(s_L)=2$, gives exact-SVP success probability within a factor two of the optimum over all widths.
Here $L^*$ is the dual lattice and $\eta_1$ is its smoothing parameter with error parameter $1$.
The same elementary comparison applies to a ball of any prescribed radius; we state and prove it in Proposition \ref{prop:balanced-width}.
Thus a converse for unmodified direct selection already follows from a deterministic width comparison once its probability at the balanced width has been evaluated.

The present problem is to determine the explicit Haar typical exponent for each approximation factor and to decide whether reducing a sampled vector to its primitive part improves that exponent.
Primitive reduction changes the successful set: arbitrarily long multiples of short primitive vectors can now succeed.
The ball comparison does not directly control this set.
We give a finite dimensional upper bound on its probability, uniform over every positive width, and match its exponential rate by a fixed width.
The resulting adaptive-query lower bound is a consequence of this common per-query bound.

Kaminaga \cite{KaminagaThermal} established primitive Gibbs visibility limits $0,1/2,1$ for fixed approximation factors $\gamma>1$.
We derive the rare-success rate, count ordinary Gaussian outputs including zero, and include exact SVP.
The main additions are the explicit approximation-dependent exponent, its preservation under primitive reduction, and finite query bounds in both directions.
The classical moment and concentration arguments needed for these statements are included below.
Their Poisson--Dirichlet edge analogue from \cite{KaminagaThermal} is not used.

Our reference ensemble is $\mathbb X_n=\SL(n,\zz)\backslash\SL(n,\re)$ with invariant probability measure $\mu_n$.
Let $v_n$ be the Euclidean unit ball volume and put $\GH_n=v_n^{-1/n}$.
For $c>0$, define
\begin{equation}
\beta_n(c)=\frac{cn}{2\GH_n^2},\qquad
s_n(c)^2=\frac{2\pi\GH_n^2}{cn}.
\label{eq:dgs-scale}
\end{equation}
The ordinary discrete Gaussian includes the zero vector:
$$
D_{L,s}(v)=\frac{e^{-\pi\|v\|^2/s^2}}{\theta_L(s)},
\qquad
\theta_L(s)=\sum_{w\in L}e^{-\pi\|w\|^2/s^2}.
$$
Write $\Theta_n(c,L)=\theta_L(s_n(c))$ and
$$
u_{n,\gamma,c}(L)=
\pr_{V\sim D_{L,s_n(c)}}\{0<\|V\|\leq\gamma\lambda_1(L)\}.
$$
For $v\neq0$, write $v=mw$ with $m\geq1$ and $w$ primitive, and define $P_L(v)=w$.
This definition is independent of the lattice basis; if $v=zB$, then $m=\gcd(z_1,\ldots,z_n)$.
Set $P_L(0)=0$ and let $\widehat u_{n,\gamma,c}(L)$ denote the success probability after applying $P_L$ to the sample.
All logarithms are natural.

\begin{thm}[Optimal sample exponent for direct Gaussian search]
\label{thm:adaptive-oracle}
Fix $\gamma\geq1$, let $L_n\sim\mu_n$, and define
\begin{equation}
K_\gamma^*=
\begin{cases}
\gamma^2/(2e)-\log\gamma,&1\leq\gamma<\sqrt e,\\
0,&\gamma\geq\sqrt e.
\end{cases}
\label{eq:optimal-raw-rate}
\end{equation}
Here $K_\gamma^*$ is measured in natural-logarithmic units.
In particular, for exact SVP,
\begin{equation}
K_1^*=\frac{1}{2e},
\qquad
\frac{K_1^*}{\log 2}
=
\frac{1}{2e\log 2}
=
0.2653689\ldots,
\label{eq:exact-base-two-rate}
\end{equation}
where the second quantity is the corresponding base-two sample exponent.
Then
\begin{equation}
-\frac1n\log\left(\sup_{c>0}u_{n,\gamma,c}(L_n)\right)
\longrightarrow K_\gamma^*
\quad\hbox{in probability}.
\label{eq:global-success}
\end{equation}
Consider a procedure making at most $N_n$ Gaussian queries, where $N_n$ is a deterministic positive integer and $n^{-1}\log N_n\to r$.
At query $j$, it chooses any $c_j>0$ measurably from $L_n$, its internal randomness, and the previous outputs.
Conditional on this history and $L_n$, the query returns a fresh draw from $D_{L_n,s_n(c_j)}$.
The procedure returns one of its nonzero samples, optionally after applying $P_{L_n}$, or reports failure.
The limit in \eqref{eq:global-success} also holds with $\widehat u$ in place of $u$.
If $r<K_\gamma^*$, its joint success probability tends to zero.
If $r>K_\gamma^*$, taking $c_j=e^{-1}$ and returning the shortest nonzero sample gives joint success tending to one.
\end{thm}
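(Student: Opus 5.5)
The plan has two halves: a converse upper bound on the per-query success probability, uniform in the width, and an attainment lower bound at the fixed width $c=e^{-1}$. Both rest on the Haar-typical behaviour of $\lambda_1$ and of lattice point counts. By Siegel's mean value formula and the Rogers second moment, $\lambda_1(L_n)=\GH_n(1+o(1))$ in probability. Moreover, for fixed $t>1$ the number of nonzero lattice points (equivalently, of primitive ones up to sign) in the ball of radius $t\GH_n$ is $t^{n+o(n)}$ in probability. At radius $t\le1$ this count is $e^{o(n)}$ with high probability, and its expectation is $t^n$.

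\textbf{Step 1: evaluate the theta function.} Write $\beta=\beta_n(c)$ and $R=\GH_n$. In scale-free form, $\theta_L(s_n(c))=\sum_{v}e^{-cn\|v\|^2/(2R^2)}$. Summing over shells of radius $tR$ with multiplicity $t^{n+o(n)}$ for $t\ge1$ gives
\[
\log\Theta_n(c,L)=n\,\max\Bigl(0,\;\sup_{t\ge1}\bigl(\log t-ct^2/2\bigr)\Bigr)+o(n).
\]
The supremum is attained at $t=c^{-1/2}$ when $c<1$ and equals $\tfrac12(-\log c-1)$. We need this uniformly over $c$ in compacts. For the converse we only need a lower bound on $\Theta$, and $\Theta\ge 1+e^{-\beta\|v\|^2}\cdot\#\{\text{points at radius}\approx t\}$ for each $t$ suffices. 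Together with monotonicity in $c$, this lets us discretize $c$ on a finite grid and take a union bound.

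\textbf{Step 2: the converse, pointwise in the width.} Success without reduction means that the sample lands in the punctured ball of radius $\gamma\lambda_1\approx\gamma R$. Its mass is at most
\[
\frac{\#\{0<\|v\|\le\gamma R\}\cdot e^{-c n\lambda_1^2/(2R^2)}}{\Theta_n}\;\lesssim\;\exp\bigl(n[\log\gamma-c/2-\max(0,\tfrac12(-\log c-1))]+o(n)\bigr).
\]
We then optimise over $c$. For $c\ge1$ the exponent is $\log\gamma-c/2$, which is largest at $c=1$. For $c<1$ it is $\log\gamma-c/2+\tfrac12(\log c+1)$, which is increasing in $c$. This crude bound therefore gives only $\log\gamma-\tfrac12$, and so it is not sharp.

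The loss comes from counting the ball points with the weight at $\lambda_1$ instead of the shell weight. Redoing the bound shell by shell, the numerator exponent is $\sup_{1\le t\le\gamma}(\log t-ct^2/2)$. The rate is then
\[
-\sup_{c}\Bigl[\sup_{t\le\gamma}\bigl(\log t-ct^2/2\bigr)-\max\bigl(0,\tfrac12(-\log c-1)\bigr)\Bigr].
\]
For $c\ge1$ the inner supremum is attained at $t=1$, giving $-c/2$. For $c<1$ with $\gamma\ge c^{-1/2}$ the success exponent is $0$, which requires $c\ge\gamma^{-2}$. Hence if $\gamma^2\ge e$ one can take $c=1/e$ and get exponent $0$. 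Otherwise, at $c=1/e$, where $\Theta=e^{o(n)}$, the numerator gives $\log\gamma-\gamma^2/(2e)$, which is exactly $-K_\gamma^*$. That $c=1/e$ is optimal should follow from Proposition~\ref{prop:balanced-width}: the balanced width $\theta=2$ corresponds to $c=e^{-1}$ in the limit, and the proposition gives success probability within a factor of two of the optimum over all widths. This yields \eqref{eq:global-success} for $u$. High-probability uniformity over all $c>0$ is needed only at the single balanced width, which is the advantage of using the proposition.

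\textbf{Step 3: primitive reduction.} This is the main obstacle. The successful set for $\widehat u$ is $\bigcup_{m\ge1}m\,B^{\mathrm{prim}}_{\gamma\lambda_1}$. The planned bound is
\[
\widehat u\le\sum_{m\ge1}\;\sum_{w\ \text{primitive},\ \|w\|\le\gamma\lambda_1}\frac{e^{-\pi m^2\|w\|^2/s^2}}{\Theta}.
\]
For widths with $c\ge\epsilon$ the $m\ge2$ terms carry an extra factor $e^{-\Omega(n)}$, so they are negligible. For very small $c$, where many multiples fit, $\Theta$ is exponentially large, at least $c^{-n/2}$ up to $e^{o(n)}$ factors. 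The sum over $m$ is at most of order $c^{-1/2}\sqrt n$, while the primitive count in the ball is $\gamma^{n}$. The ratio is therefore at most $e^{n(\log\gamma-\frac12\log(1/c)-\frac12)+o(n)}$, which is below $e^{-nK}$ once $c$ is small.

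The difficulty is making the lower bound on $\Theta$ hold uniformly in all $c\to0$ with high probability. I would use the deterministic Poisson-summation bound $\theta_L(s)\ge s^n/\det L$ for unimodular $L$, which handles every width at once. The Haar-typical input is then needed only for finitely many $c$ in a compact interval, handled by a grid and monotonicity as in Step 1.

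\textbf{Step 4: adaptive queries.} Given the bound $\sup_c \widehat u\le e^{-n(K_\gamma^*-\epsilon)}$ on a high-probability event, each query succeeds with at most this probability conditional on the history, whatever width is chosen. A union bound over $N_n=e^{n(r+o(1))}$ queries with $r<K^*_\gamma$ gives success tending to zero. For $r>K^*_\gamma$, the $N_n$ independent draws at $c=1/e$ each succeed with probability $e^{-n(K^*_\gamma+o(1))}$ in probability over $L_n$, so $1-(1-u)^{N_n}\to1$. Returning the shortest nonzero sample then succeeds whenever any sample does.
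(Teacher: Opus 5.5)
Your proposal is correct in outline, but it reaches the converse by a different route from the paper. The paper does not use the balanced width in its proof. It combines $\theta_L(s)\ge\max\{1,s^n\}$ from Poisson summation with a per-vector maximization of $e^{-\pi r^2/s^2}/\max\{1,s^n\}$ over $s$, which gives the envelope $H_n(r)$ of \eqref{eq:pointwise-envelope}. It then sums this envelope over all multiples $mw$ of primitive $w$ in the annulus $\kappa\GH_n\le\|w\|\le\gamma R\GH_n$. One primitive first moment and Markov's inequality then bound $\sup_{c>0}\widehat u$ for every width at once, and the multiples contribute only $M_2=2^{-n+o(n)}$.

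You instead use the ADRS factor-two comparison (Proposition~\ref{prop:balanced-width}) for the unmodified ball event, and you treat primitive reduction by a case split. For $c\ge\epsilon$, the deterministic factor $1+r(a)$ in \eqref{eq:primitive-reduction-relative} is $1+e^{-\Omega(n)}$ uniformly once $\lambda_1\ge\kappa\GH_n$. For $c<\epsilon$, the Poisson bound alone makes $\widehat u$ negligible. Your route reuses two existing comparisons and shows that recovery through multiples can matter only at widths where success is already hopeless. The paper's route avoids the factor $2$, the random balanced width and the cutoff $\epsilon$. It also yields the explicit finite-$n$ constants $U,E_+$ used in Corollary~\ref{cor:finite-query-converse} and Table~\ref{tab:finite-bounds}.

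Two details need repair, though neither is structural.

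First, $s_L$ depends on $L$, so ``uniformity only at the balanced width'' still means evaluating at a random width. The simplest fix: $\theta_L(s)\ge s^n$ gives $s_L\le 2^{1/n}$. Since the ball numerator $A_b$ from the proof of Proposition~\ref{prop:balanced-width} increases in $s$, this gives $\sup_s u\le 2q_b(s_L)=A_b(s_L)\le A_b(2^{1/n})$. A first moment at this deterministic width, where $c_n\to e^{-1}$, then gives the rate $\log\gamma-\gamma^2/(2e)$.

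Second, in Step 3 one has $\theta\ge s_n(c)^n=\sqrt{\pi n}\,(ec)^{-n/2}\{1+o(1)\}$, not $c^{-n/2}$ up to $e^{o(n)}$. Also $\sum_{m\ge1}e^{-\pi m^2\lambda_1^2/s^2}\le s/(2\lambda_1)$. The small-width exponent is therefore $\log\gamma+\frac12(1+\log c)$, not $\log\gamma+\frac12\log c-\frac12$. Because $s^{1-n}$ decreases in $s$, for $1\le\gamma<\sqrt e$ any fixed cutoff $\epsilon<e^{-1-\gamma^2/e}$ then works uniformly for all $c<\epsilon$.
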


Thus ``optimal'' refers to the sample exponent within the direct search model.
Theorem \ref{thm:adaptive-oracle} therefore identifies a sharp barrier for this mechanism: neither adaptive choice of Gaussian widths nor primitive reduction improves the exponent $K_\gamma^*$.
The achieving procedure uses only sample lengths and does not require $\lambda_1(L)$.
For exact SVP the threshold in base two is $1/(2e\log2)=0.2653689\ldots$.
Factors below the exponential scale and behavior at equality are left open.
The proof is in Section \ref{sec:full-target}.
Theorem \ref{thm:finite-converse} gives an explicit finite dimensional upper bound for $\sup_{c>0}\widehat u_{n,\gamma,c}(L)$ outside a stated Haar exceptional set.
Section \ref{sec:calibration} combines this converse with an attainment bound and gives finite query budgets.

\paragraph{Algorithmic meaning and prior work.}
The output restriction defines the resource measured here: a short vector must come from one sampled ray.
The theorem evaluates this mechanism rather than the complexity of unrestricted SVP.
ADRS supply the deterministic near-optimal width comparison, and Pouly--Shen implement direct selection at a smoothing based width.
Our rate formula evaluates the Haar typical sample requirement, while the finite converse includes recovery from multiples.
Table \ref{tab:contributions} separates these results from Gaussian generation algorithms.

\begin{table}[t]
\centering
\caption{Prior results and the question settled here.
Recent preprints are identified as such; their algorithms are not assumptions of our probability theorems.}
\label{tab:contributions}
\small
\begin{tabular}{@{}p{0.22\linewidth}p{0.35\linewidth}p{0.365\linewidth}@{}}
\hline
Work & Established result or reported algorithm & Relation to this paper\\
\hline
ADRS \cite{ADRS15}
& DGS algorithms and an exact-SVP width within a factor two of optimal on every lattice.
& We evaluate the Haar typical exponent for each approximation factor and include primitive reduction.\\[3pt]
Pouly--Shen \cite{PoulyShen26}
& Random lattice SVP search with a smoothing based sampler and a time tradeoff.
& We evaluate the sample requirement at all widths and give finite bounds for the larger output model.\\[3pt]
Kaminaga \cite{KaminagaThermal} (preprint)
& Primitive Gibbs visibility limits $0,1/2,1$ for fixed $\gamma>1$.
& We add rare event rates, zero-inclusive query counts, exact SVP, and finite converse bounds.\\[3pt]
Kim \cite{Kim26DGS} (preprint)
& One arbitrary-width DGS output in expected time $2^{n/2+o(n)}$.
& This is a generation cost, distinct from the number of outputs required for direct selection.\\[3pt]
Hhan \cite{Hhan26Coset} (preprint)
& Exact SVP in time $2^{n/2+o(n)}$ using a coset difference tree.
& Combining samples and recovering a vector from Gaussian statistics falls outside direct selection.\\
\hline
\end{tabular}
\end{table}

Section \ref{sec:oracle-cost} recovers the Pouly--Shen tradeoff and compares generation costs with both that benchmark and the later algorithmic results.
The Haar ensemble is the reference model used here and in \cite{PoulyShen26}; conclusions for structured cryptographic lattices require corresponding distributional estimates.

\paragraph{Why the optimal temperature differs from the visibility threshold.}
A colder Gaussian favors short vectors conditional on a nonzero output, but also gives greater mass to zero.
For $0<c<\gamma^{-2}$, we prove the conditional success exponent
$$
\Delta_\gamma(c)=\frac12\{c\gamma^2-1-\log(c\gamma^2)\}.
$$
Adding the nonzero generation exponent gives $K_\gamma(c)$ in Theorem \ref{thm:raw-success}.
For $1\leq\gamma<\sqrt e$, its unique minimum over $0<c<1$ is at $c=e^{-1}$, below the conditional visibility threshold.
The uniform estimate in Theorem \ref{thm:adaptive-oracle} proves that widths outside this regime, or adaptive changes of width, cannot improve the exponent.

\paragraph{Proof structure.}
The fixed-width rate follows from radial volume and Gaussian weight, with moment bounds controlling the random numerator and denominator.
The converse uses Poisson summation to bound the denominator below by $\max\{1,s^n\}$.
Optimizing each vector's resulting upper bound over $s$ gives an integrable bound on a truncated annulus.
Its primitive first moment separates direct hits from recovery through multiples $m\geq2$.
The latter contribution has rate $2^{-n+o(n)}$ in the relevant regime, below the successful direct mass.
This gives explicit finite constants and a common bound for all widths without a temperature mesh in the converse.
Laarhoven \cite{Laarhoven26Spherical} studies pair and collision statistics of complete thin shells, involving relations between distinct vectors.

Sections \ref{sec:random-input}--\ref{sec:full-target} prove the main theorem.
Section \ref{sec:oracle-cost} treats generation costs and sampling accuracy.
Section \ref{sec:calibration} gives finite query bounds and numerical certificates.

\section{Random lattice thermal estimates}
\label{sec:random-input}

We specify the thermal notation and recall the moment estimates used below.
Let $L^\sharp=(L\setminus\{0\})/\{\pm1\}$, and let $L^\sharp_{\rm prim}$ be its primitive sign classes.
A nonzero vector is primitive if it is not an integer multiple $mw$ of another lattice vector with $m\geq2$.
For a sign class $u$, its norm is the norm of either representative, and
$$
\tau_n(u)=v_n\|u\|^n,\qquad
Y_n(u)=\frac1n\log\tau_n(u)=\log\frac{\|u\|}{\GH_n},
$$
$$
g_{n,c}(t)=\exp\left\{-\frac{cn}{2}(t^{2/n}-1)\right\}.
$$
In particular, $g_{n,c}(\tau_n(u))=e^{cn/2}e^{-\beta_n(c)\|u\|^2}$.
Let $\pi^{\rm prim}_{n,c,L}$ be the probability law on $L^\sharp_{\rm prim}$ proportional to this weight, and put
$$
H_{n,\rho}(L)=\{u\in L^\sharp_{\rm prim}:\|u\|\leq\rho\GH_n\},
$$
$$
G_{n,\gamma}(L)=\{u\in L^\sharp_{\rm prim}:\|u\|\leq\gamma\lambda_1(L)\}.
$$
The fixed-temperature concentration statements are from \cite{KaminagaThermal}; the proofs below also cover deterministic varying temperatures.

For a Borel set $A\subset \re$, define the primitive weighted mass
\begin{equation}
B_{n,c}(A,L)=
\sum_{\bm u\in L^\sharp_{\rm prim},\,Y_n(\bm u)\in A}
        g_{n,c}(\tau_n(\bm u)).
\label{eq:B-A}
\end{equation}
We write $B_{n,c}(L)=B_{n,c}(\re,L)$.
The corresponding primitive Gibbs mass of $A$ is
\begin{equation}
\Pi_{n,c}(A,L)=\frac{B_{n,c}(A,L)}{B_{n,c}(L)}.
\label{eq:Pi-A}
\end{equation}
Thus, for example,
$$
\Pi_{n,c}((-\infty,a],L)
$$
is the primitive Gibbs mass of the vectors satisfying $Y_n(\bm u)\leq a$.

The entropy--energy exponent behind the first moment is
\begin{equation}
\Phi_c(y)=y-\frac{c}{2}(e^{2y}-1).
\label{eq:Phi}
\end{equation}
For $0<c<1$ this function has its unique maximum at
$$
y_c=\frac{1}{2}\log\frac{1}{c},
$$
and the maximum value is
\begin{equation}
F(c)=\Phi_c(y_c)
=\frac{1}{2}\log\frac{1}{c}-\frac{1-c}{2}.
\label{eq:F}
\end{equation}

\begin{lem}[Primitive moments]
\label{lem:primitive-moments}
Let $n\geq3$ and let $f:\re^n\to[0,\infty)$ be Borel and even, with $f,f^2$ integrable, and put $S_f(L)=\sum_{u\in L_{\rm prim}^\sharp}f(u)$.
Then
\begin{equation}
\label{eq:primitive-moments}
\ex_{\mu_n}S_f=
\frac{1}{2\zeta(n)}\int_{\re^n}f(x)\,dx,
\qquad
\operatorname{Var}_{\mu_n}S_f=
\frac{1}{2\zeta(n)}\int_{\re^n}f(x)^2\,dx.
\end{equation}
\end{lem}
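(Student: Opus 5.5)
The plan is to derive both identities from the Siegel mean value theorem and Rogers' second moment formula, each in its primitive form, applied to the full set $L_{\rm prim}$ of primitive vectors rather than to sign classes. Since $f$ is even, each sign class $u=\{\pm w\}$ contributes $f(w)=f(-w)$, so
$$
S_f(L)=\frac12\sum_{w\in L_{\rm prim}}f(w).
$$
The primitive Siegel formula, $\ex_{\mu_n}\sum_{w\in L_{\rm prim}}f(w)=\zeta(n)^{-1}\int_{\re^n}f$, then gives the first identity in \eqref{eq:primitive-moments} at once. I will cite it for nonnegative Borel $f$; it also follows from the full Siegel formula by M\"obius inversion over the multiples $mw$.

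For the second moment, I will expand
$$
S_f^2=\frac14\sum_{w,w'\in L_{\rm prim}}f(w)f(w').
$$
I then split the ordered pairs into linearly independent pairs and dependent pairs. Two primitive vectors can be dependent only when $w'=\pm w$.

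Rogers' primitive second moment formula applies for $n\geq3$, and I expect the exact statement invoked there to be the one step that needs checking. It states that the expectation of the sum over linearly independent primitive pairs of $F(w,w')$ equals $\zeta(n)^{-2}\iint F(x,y)\,dx\,dy$. I will apply it with $F(x,y)=f(x)f(y)$.

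The dependent pairs $w'=w$ and $w'=-w$ contribute $2\sum_{w\in L_{\rm prim}}f(w)^2$, using evenness once more. Siegel's formula applied to $f^2$ turns this into $2\zeta(n)^{-1}\int f^2$. Hence
$$
\ex_{\mu_n}S_f^2=\frac14\left\{\zeta(n)^{-2}\Bigl(\int f\Bigr)^2+2\zeta(n)^{-1}\int f^2\right\}.
$$
Subtracting $(\ex S_f)^2=\frac14\zeta(n)^{-2}(\int f)^2$ leaves exactly $\frac{1}{2\zeta(n)}\int f^2$.

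The main obstacle is technical: guaranteeing that the Rogers formula holds at the stated generality and that the subtraction is legitimate. Rogers' formula is classically proved for bounded compactly supported or continuous $F$. I will extend it to nonnegative Borel $F$ by monotone convergence, approximating $f$ from below by an increasing sequence of bounded compactly supported functions. Both sides are monotone limits of nonnegative quantities, so the identity passes to the limit.

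Integrability of $f$ and of $f^2$ is then needed only to ensure that $\ex S_f$ and $\ex S_f^2$ are finite. This finiteness is what makes the variance well defined and justifies subtracting $(\ex S_f)^2$. The restriction $n\geq3$ is exactly where Rogers' formula for $k=2$ requires $k<n$.
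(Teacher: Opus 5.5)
Your proposal is correct and follows essentially the paper's route: Rogers's formula for linearly independent pairs with coefficient $\zeta(n)^{-2}$, the diagonal pairs $(w,\pm w)$ handled by the primitive first moment and evenness, division by two to pass to sign classes, and monotone convergence from bounded compactly supported truncations. The step you flag is exactly the one the paper checks: it applies M\"obius inversion separately to each vector in the standard identity $\ex\sum_{\text{indep.}}F(v,w)=\iint F$, where each vector is primitive but the pair need not extend to a basis (that stronger condition would give $1/(\zeta(n)\zeta(n-1))$ instead), and it justifies the interchange by $\sum_{d,e\geq1}d^{-n}e^{-n}<\infty$.
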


\begin{proof}
First take bounded, compactly supported $f$.
Rogers's formula \cite{Rogers55} gives the product integral for the sum over linearly independent nonzero pairs.
Applying M\"obius inversion separately to the two vectors changes its coefficient to
$\sum_{d,e\geq1}\mu(d)\mu(e)d^{-n}e^{-n}=\zeta(n)^{-2}$.
The interchange is justified by the corresponding absolutely summable independent-pair integrals.
Here each vector is primitive; the pair need not extend to a lattice basis.
The remaining primitive pairs are $(v,v)$ and $(v,-v)$, contributing
$\zeta(n)^{-1}\int(f(x)^2+f(x)f(-x))\,dx$ to the second moment of the sum over both signs.
The primitive first moment has coefficient $\zeta(n)^{-1}$.
Dividing the sum by two and using evenness proves \eqref{eq:primitive-moments}; see also \cite[Proposition 10]{KaminagaThermal}.
Increasing bounded compact truncations and monotone convergence extend the first and second moment identities to the stated class.
\end{proof}

\begin{lem}[Shortest length bounds]
\label{lem:shortest-bounds}
\label{lem:lambda-gh}
For $0<r<1<R$, one has
$$
\pr_{\mu_n}\{\lambda_1(L)<r\GH_n\}
\leq\frac{r^n}{2\zeta(n)},
\qquad
\pr_{\mu_n}\{\lambda_1(L)>R\GH_n\}
\leq\frac{2\zeta(n)}{R^n}.
$$
In particular, $\lambda_1(L_n)/\GH_n\to1$ in probability, and
\begin{equation}
\sqrt n\log\{\lambda_1(L_n)/\GH_n\}\to0
\quad\hbox{in probability}.
\label{eq:shortest-local}
\end{equation}
\end{lem}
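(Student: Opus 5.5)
The two tail bounds follow from the primitive moment formulas of Lemma \ref{lem:primitive-moments}: the lower tail by a first-moment (Markov) argument, and the upper tail by a second-moment (Chebyshev) argument. The local statement \eqref{eq:shortest-local} then comes from substituting radii of the form $e^{\pm\varepsilon/\sqrt n}$.

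\emph{Lower tail.} Take $f=\mathbf 1_{B(0,r\GH_n)}$, which is even, bounded and compactly supported. If $\lambda_1(L)<r\GH_n$, then a shortest nonzero vector is primitive and lies in the open ball. Hence $S_f(L)\geq1$ on this event. By Markov's inequality and Lemma \ref{lem:primitive-moments},
$$\pr\{\lambda_1<r\GH_n\}\leq\ex S_f=\frac{v_n r^n\GH_n^n}{2\zeta(n)}=\frac{r^n}{2\zeta(n)},$$
since $v_n\GH_n^n=1$. Using the open ball with the strict inequality avoids boundary issues; the closed ball gives the same volume anyway.

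\emph{Upper tail.} Take $f=\mathbf 1_{B(0,R\GH_n)}$, so that $f^2=f$. Write $m:=\ex S_f=R^n/(2\zeta(n))$; Lemma \ref{lem:primitive-moments} gives $\operatorname{Var}S_f=m$ as well. If $\lambda_1(L)>R\GH_n$, the ball contains no nonzero lattice vector, so $S_f=0$. By Chebyshev's inequality,
$$\pr\{S_f=0\}\leq\pr\{|S_f-m|\geq m\}\leq\frac{\operatorname{Var}S_f}{m^2}=\frac1m=\frac{2\zeta(n)}{R^n}.$$
This requires $n\geq3$, as in the lemma. For small $n$ the statement is either trivially true or the limits below concern only large $n$.

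\emph{Consequences.} Fix $\varepsilon>0$ and set $r=e^{-\varepsilon/\sqrt n}$ and $R=e^{\varepsilon/\sqrt n}$. Then
$$r^n=e^{-\varepsilon\sqrt n}\to0,\qquad R^{-n}\to0,\qquad \zeta(n)\to1.$$
So with probability tending to one, $|\log(\lambda_1/\GH_n)|\leq\varepsilon/\sqrt n$, i.e.\ $\sqrt n\,|\log(\lambda_1/\GH_n)|\leq\varepsilon$. This is \eqref{eq:shortest-local}, and it implies $\lambda_1(L_n)/\GH_n\to1$ in probability. The upper-tail step is the only nontrivial one, and it is immediate here because the lemma supplies the variance exactly.
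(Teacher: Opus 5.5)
Your proposal is correct and is the paper's own argument: Markov's inequality on the primitive count in the ball of radius $r\GH_n$ for the lower tail, and Chebyshev's inequality with mean and variance both equal to $R^n/(2\zeta(n))$ for the upper tail. The local statement then follows from the substitution $r=e^{-\varepsilon/\sqrt n}$, $R=e^{\varepsilon/\sqrt n}$, and you only spell out details the paper leaves implicit, such as $v_n\GH_n^n=1$ and the inclusion $\{S_f=0\}\subset\{|S_f-m|\geq m\}$.
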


\begin{proof}
The number of primitive sign classes in a ball of radius $a\GH_n$ has mean and variance $a^n/(2\zeta(n))$, by Lemma~\ref{lem:primitive-moments}.
A shortest vector is primitive.
Markov's inequality gives the first bound and Chebyshev's inequality gives the second.
For any fixed $\eta>0$, take $r=e^{-\eta/\sqrt n}$ and $R=e^{\eta/\sqrt n}$ to obtain \eqref{eq:shortest-local}.
The limiting length scale also follows from S\"odergren's Poisson theorem \cite{Sodergren11}.
\end{proof}

The fixed temperature concentration and local profile were proved in \cite[Theorem 13 and Proposition 15]{KaminagaThermal}.
We give the uniform estimate below to include deterministic temperatures depending on $n$, as arise from an exactly prescribed Gaussian width.

For the radial moment estimates, put
$$
\Psi_c(y)=y-c(e^{2y}-1),\qquad
J_\pm=n\int_{\re_\pm}e^{n\Phi_c(y)}\dd y,
\qquad K_+=n\int_0^\infty e^{n\Psi_c(y)}\dd y.
$$
Here $\re_+=[0,\infty)$ and $\re_-=(-\infty,0)$; the dependence on $n,c$ in $J_\pm,K_+$ is omitted from the notation.
By Lemma \ref{lem:primitive-moments}, the mean positive scale mass is $J_+/(2\zeta(n))$ and its variance is $K_+/(2\zeta(n))$.

For $k>0$, let
$$
P(k,x)=\frac{1}{\Gamma(k)}\int_0^x t^{k-1}e^{-t}\,dt,
\qquad Q(k,x)=1-P(k,x).
$$
Put $k=n/2$ and
$$
D_{n,c}=e^{cn/2}\Gamma(k+1)\left(\frac{2}{cn}\right)^k.
$$
Direct changes of variables give
\begin{equation}
\label{eq:gamma-integrals}
\begin{array}{rcl}
J_+&=&D_{n,c}Q(k,cn/2),\\
J_-&=&D_{n,c}P(k,cn/2),\\
K_+&=&e^{cn}\Gamma(k+1)(cn)^{-k}Q(k,cn).
\end{array}
\end{equation}
For $\rho>1$, the ratio of the full first moment integrals is
\begin{equation}
\label{eq:gamma-profile}
q_{n,c}(\rho)=P(n/2,cn\rho^2/2).
\end{equation}
The function $q_{n,c}(\rho)$ is a ratio of first moment integrals; it is not the expectation of the normalized Gibbs mass.
This is the same incomplete gamma integral used for ball masses in \cite{PoulyShen26}.
The estimates below control the random denominator as well, including temperatures at which its full relative variance does not tend to zero.

\begin{lem}[Uniform partition estimate]
\label{lem:uniform-partition}
Let $I=[c_0,c_1]\subset(0,1)$ be fixed.
Uniformly for $c\in I$, $J_-/J_+\to0$ and $K_+/J_+^2\to0$.
For every deterministic $c_n\in I$,
\begin{equation}
\frac{B_{n,c_n}(L_n)}{D_{n,c_n}/(2\zeta(n))}\to1
\quad\hbox{in probability}.
\label{eq:partition-equivalent}
\end{equation}
\end{lem}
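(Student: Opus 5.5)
The plan has three parts: two deterministic asymptotic statements about $J_\pm$ and $K_+$, obtained from the closed forms \eqref{eq:gamma-integrals}, and then a second-moment argument for $B_{n,c_n}$ using Lemma~\ref{lem:primitive-moments}.

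\textbf{Step 1: $J_-/J_+\to0$ uniformly on $I$.}
From \eqref{eq:gamma-integrals} we have $J_-/J_+=P(k,ck\cdot 2/2)/Q(\cdot)$ with $k=n/2$ and argument $x=ck$, where $c\leq c_1<1$. Since $x/k=c\leq c_1<1$, the standard Chernoff bound for a Gamma$(k)$ variable gives
\[
P(k,ck)\leq \exp\{-k(c-1-\log c)\}\leq \exp\{-k(c_1-1-\log c_1)\}.
\]
This tends to zero uniformly on $I$, because $c-1-\log c$ is decreasing on $(0,1)$. It follows that $Q\to1$ and hence $J_-/J_+\to0$.

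\textbf{Step 2: $K_+/J_+^2\to0$ uniformly on $I$.}
We have
\[
\frac{K_+}{J_+^2}\leq \frac{e^{cn}\Gamma(k+1)(cn)^{-k}}{e^{cn}\Gamma(k+1)^2(2/(cn))^{2k}Q^2}
=\frac{(cn)^{k}\,2^{-2k}}{\Gamma(k+1)Q^2}.
\]
By Stirling, $\Gamma(k+1)\sim\sqrt{2\pi k}\,(k/e)^k$. The ratio is therefore
\[
\asymp k^{-1/2}\left(\frac{2ck\,e}{4k}\right)^k=k^{-1/2}\left(\frac{ce}{2}\right)^k .
\]
Since $c<1<2/e$, this decays exponentially, and uniformly in $c\in I$. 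This part needs care only in reconciling the exact constants; I expect $(ce/2)^k$ or a similar power with base less than one to appear.

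\textbf{Step 3: concentration of $B_{n,c_n}(L_n)$.}
Split $B_{n,c_n}=B_{n,c_n}(\re_-,L)+B_{n,c_n}(\re_+,L)$. The negative part is nonnegative with mean $J_-/(2\zeta(n))$, which is $o(J_+)$ by Step 1, so by Markov's inequality it is $o_P(J_+)$.

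For the positive part, apply Lemma~\ref{lem:primitive-moments} with $f=g_{n,c}\circ\tau_n\cdot\mathbf 1_{Y_n\geq0}$; both $f$ and $f^2$ are integrable because of the Gaussian tails. Its mean is $J_+/(2\zeta(n))$ and its variance is $K_+/(2\zeta(n))$, since $f^2$ is exactly the $\Psi_c$ integrand on $[0,\infty)$. Chebyshev's inequality gives
\[
\pr\bigl\{|B_+-\ex B_+|>\epsilon\,\ex B_+\bigr\}\leq \frac{2\zeta(n)\,K_+}{\epsilon^2 J_+^2}\to0 .
\]

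Finally, $J_++J_-=D_{n,c}$ by \eqref{eq:gamma-integrals}, so $J_+/D_{n,c_n}\to1$. Combining the two parts yields \eqref{eq:partition-equivalent}. Uniformity in $c$ is what permits deterministic $c_n$: every bound above depends on $c$ only through $c_0,c_1$.

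The main obstacle is keeping the Gamma-function constants in $K_+/J_+^2$ exact, so that the base of the exponential is verified to be strictly below one uniformly; the lower bound $c\geq c_0$ is needed to keep $Q$ and the Stirling estimates controlled. The variance concentration works only because the squared weight has a doubled temperature that still keeps its mass near a bounded scale, which is presumably why the interval is restricted to $(0,1)$.
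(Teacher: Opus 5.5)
Your Step 2 fails on part of the interval. The inequality $1<2/e$ is false ($2/e\approx0.736$), so the bound you derive,
\[
\frac{K_+}{J_+^2}\le\frac{(cn)^k2^{-2k}}{\Gamma(k+1)\,Q(k,cn/2)^2}\sim\frac{(ec/2)^k}{\sqrt{2\pi k}},
\]
grows exponentially for $c\in(2/e,1)$. If $c_1>2/e$, your Chebyshev step then gives nothing for such $c_n$.

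The loss comes from replacing $Q(k,cn)$ by $1$. After that replacement, your bound is, up to a factor tending to $2$, the relative variance of the \emph{untruncated} mass $B_{n,c}(L)$. The paper's Remark following this lemma identifies that quantity as $\sim2\zeta(n)(ec/2)^{n/2}/\sqrt{\pi n}$, which grows for $c>2/e$. The point of splitting at $Y_n=0$ is that the factor you discarded, $Q(k,cn)=Q(k,2ck)$, is exponentially small once $c>1/2$. That factor is what carries the estimate up to $c_1<1$.

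Your closing explanation of the restrictions on $c$ is off for the same reason. In your closed-form route only $c\le c_1<1$ matters, through the positivity of $c_1-1-\log c_1$. The lower endpoint $c_0$ is never used.

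The repair stays inside your closed-form approach. For $c>1/2$, keep the factor and use the upper-tail Chernoff bound $Q(k,2ck)\le\exp\{-k(2c-1-\log 2c)\}$ together with $\Gamma(k+1)\ge\sqrt{2\pi k}\,(k/e)^k$. The exponents combine to
\[
\frac{K_+}{J_+^2}\le\frac{e^{-2k(c-1-\log c)}}{\sqrt{2\pi k}\,Q(k,ck)^2}\le\frac{e^{-2k(c_1-1-\log c_1)}}{\sqrt{2\pi k}\,Q(k,ck)^2}.
\]
For $c\le1/2$, your own bound is at most $(e/4)^k/(\sqrt{2\pi k}\,Q(k,ck)^2)$. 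Step 1 keeps $Q(k,ck)$ bounded away from zero uniformly, so $K_+/J_+^2\to0$ uniformly on $I$. The resulting decay rate per unit $n$, namely $c-1-\log c$, is exactly the paper's $2F(c)-S(c)$ with $S(c)=0$ for $c\ge1/2$.

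With this correction your proof is valid, and it is a more explicit alternative to the paper's uniform Laplace bounds for $J_+$ and $K_+$. Steps 1 and 3 are correct as written, and Step 3 is the same Chebyshev/Markov split the paper uses.
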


\begin{proof}
Concavity gives $\Phi_c(y)\leq(1-c)y\leq(1-c_1)y$ for $y\leq0$, so $J_-\leq(1-c_1)^{-1}$.
The maximum of $\Phi_c$ stays in a compact subset of $(0,\infty)$, and $\Phi_c''(y_c)=-2$.
Uniform Laplace bounds give $J_+=\exp\{nF(c)+o(n)\}$.
Also
$$
S(c):=\sup_{y\geq0}\Psi_c(y)=
\begin{cases}
\frac12\log(1/(2c))-\frac12+c,&0<c<1/2,\\
0,&1/2\leq c<1,
\end{cases}
$$
and $K_+\leq\exp\{nS(c)+o(n)\}$ uniformly on $I$.
For completeness, the positive tails are bounded uniformly by replacing $c$ with $c_0$ in the negative exponential term.
On the remaining fixed compact interval the derivatives are bounded uniformly; a neighborhood of each $y_c$ gives the lower bound for $J_+$.
Thus all $o(n)$ terms can be chosen uniformly.
One has $F(c)>0$ and $2F(c)-S(c)>0$ on $(0,1)$, with positive minima on $I$.
The two ratio limits follow.

Write $B_+=B_{n,c}([0,\infty),L_n)$ and $B_-=B_{n,c}((-\infty,0),L_n)$.
Chebyshev's inequality gives $B_+/(J_+/(2\zeta(n)))\to1$; Markov's inequality gives $B_-/(J_+/(2\zeta(n)))\to0$.
Since $D_{n,c}=J_++J_-$, this proves \eqref{eq:partition-equivalent}.
\end{proof}

\begin{rem}
The split at $Y_n=0$ extends the estimate beyond the range of the untruncated second moment argument.
Its full relative variance is asymptotic to $2\zeta(n)(ec/2)^{n/2}/\sqrt{\pi n}$ and grows for $c>2/e$.
This extension supplies the complete fixed-width rate on $(0,1)$.
The optimizing temperature $e^{-1}$ already lies in the range of the untruncated argument; the extension is used for the full fixed-width rate.
\end{rem}

\begin{lem}[Thermal profile and visibility]
\label{thm:uniform-local-profile}
\label{cor:deterministic-visibility}
Let $c_n$ be deterministic in a fixed compact subinterval of $(0,1)$, and let
$y_{c_n}=\frac12\log(1/c_n)$ and $H(t)=\pi^{-1/2}\int_{-\infty}^t e^{-x^2}\dd x$.
For fixed $t\in\re$,
$$
\Pi_{n,c_n}((-\infty,y_{c_n}+t/\sqrt n],L_n)\to H(t)
\quad\hbox{in probability}.
$$
Consequently, for fixed $0<c<1$ and $\rho>1$, the mass
$\pi^{\rm prim}_{n,c,L_n}(H_{n,\rho}(L_n))$ tends to $0$, $1/2$, or $1$ according as $c<\rho^{-2}$, $c=\rho^{-2}$, or $c>\rho^{-2}$.
\end{lem}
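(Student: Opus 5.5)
We will prove the profile statement by a truncated first- and second-moment argument at the scale $y_{c_n}+t/\sqrt n$. Then we divide by the partition function, which is controlled by Lemma~\ref{lem:uniform-partition}.

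Put $a_n=y_{c_n}+t/\sqrt n$. Write the numerator as $B_{n,c_n}((-\infty,a_n],L_n)=B_-+B_{[0,a_n]}$, where the split is at $Y_n=0$ as in the proof of Lemma~\ref{lem:uniform-partition}. By Lemma~\ref{lem:primitive-moments} with $f(x)=g_{n,c}(v_n\|x\|^n)\mathbf 1\{0\le Y\le a_n\}$ in polar coordinates, the mean of $B_{[0,a_n]}$ is
\[
\frac{n}{2\zeta(n)}\int_0^{a_n}e^{n\Phi_{c_n}(y)}\,dy,
\]
and its variance is at most $K_+/(2\zeta(n))$.

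The piece $B_-$ is negligible relative to $J_+$ by Markov's inequality, exactly as before. For the main piece, Chebyshev's inequality gives relative error at most $K_+/J_+^2\to0$ for the truncated piece too, because truncation only lowers the variance. It also lowers the mean, but only by a factor $H(t)>0$, once the Laplace computation below is done. Hence
\[
B_{n,c_n}((-\infty,a_n],L_n)=\frac{1+o_p(1)}{2\zeta(n)}\,n\int_0^{a_n}e^{n\Phi_{c_n}}\,dy .
\]
Dividing by \eqref{eq:partition-equivalent} reduces the claim to the deterministic limit
\[
\frac{n\int_0^{a_n}e^{n\Phi_{c_n}(y)}\,dy}{D_{n,c_n}}\to H(t).
\]

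This limit is a uniform Laplace/CLT computation. Since $\Phi_c''(y_c)=-2$, we have $n\Phi_{c}(y_c+x/\sqrt n)=nF(c)-x^2+O(|x|^3/\sqrt n)$ on $|x|\le n^{1/6}$. This holds uniformly in $c$ in the compact interval, because $\Phi_c'''$ is bounded there, so the local integral over $x\le t$ gives $\sqrt{\pi}\,H(t)$ times the normalizer. The tails $|x|>n^{1/6}$ are killed by strict concavity: $\Phi_c''\le-2c_0e^{2y}\cdot$const on bounded $y$, and there is linear decay on the far right.

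Alternatively, one can use the exact form \eqref{eq:gamma-profile}. The ratio equals $P(n/2,c_nne^{2a_n}/2)-P(n/2,c_nn/2)$, up to the negligible $J_-$ part. We have $c_nne^{2a_n}/2=\tfrac n2 e^{2t/\sqrt n}=\tfrac n2+t\sqrt n+O(1)$. The gamma central limit theorem, with $\Gamma(n/2)$ having mean $n/2$ and standard deviation $\sqrt{n/2}$, gives the limit $\Phi_{\rm norm}(t\sqrt2)=H(t)$, while $P(n/2,c_nn/2)\to0$ because $c_n\le c_1<1$. I would use this explicit route since it avoids error bookkeeping. The only care needed is uniformity in $c_n$, and here $c_n$ enters only through $a_n$, which cancels exactly.

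For the visibility consequence, note that $H_{n,\rho}$ corresponds to $Y_n\le\log\rho$. Compare $\log\rho$ with $y_c=\tfrac12\log(1/c)$. If $c>\rho^{-2}$, then $\log\rho>y_c$, so for every fixed $t$ the event eventually contains $Y\le y_c+t/\sqrt n$; monotonicity and $H(t)\to1$ give the limit $1$. If $c<\rho^{-2}$, the symmetric argument gives $0$. If $c=\rho^{-2}$, take $t=0$ to get $H(0)=1/2$.

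The main obstacle I expect is justifying that the Chebyshev step works for a truncated set whose mean is only a constant fraction of $J_+$. This holds because $K_+/J_+^2\to0$ uniformly on $I$, which Lemma~\ref{lem:uniform-partition} already supplies.
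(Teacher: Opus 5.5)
Your proposal is correct and follows essentially the same route as the paper. You split the numerator at $Y_n=0$, discard the negative-scale mass by Markov, control $B_{n,c_n}([0,a_n],L_n)$ by Chebyshev using the variance bound $K_+/(2\zeta(n))$ and $K_+/J_+^2\to0$, divide by the partition estimate of Lemma~\ref{lem:uniform-partition}, and identify the deterministic ratio $P(n/2,(n/2)e^{2t/\sqrt n})-P(n/2,c_nn/2)$ with $H(t)$ via the gamma central limit theorem; the visibility trichotomy is then obtained exactly as in the paper, by comparing $\log\rho$ with $y_c$, taking $t=0$ at equality and letting $t\to\pm\infty$ otherwise.
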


\begin{proof}
These are the profile and visibility statements of \cite[Theorem 13 and Proposition 15]{KaminagaThermal}.
For the varying-temperature version, put $a_n=y_{c_n}+t/\sqrt n>0$ for large $n$.
The variance of $B_{n,c_n}([0,a_n],L_n)$ is at most $K_+/(2\zeta(n))$.
Lemma \ref{lem:uniform-partition}, including its bound on the negative scale mass, therefore shows that its Gibbs ratio differs by $o_{\pr}(1)$ from
$P(n/2,(n/2)e^{2t/\sqrt n})$.
The central limit theorem for a unit-scale gamma variable of shape $n/2$ gives the limit $H(t)$.
At $\log\rho=y_c$ take $t=0$; away from $y_c$, compare with $y_c+t/\sqrt n$ and let $t\to-\infty$ or $t\to\infty$.
\end{proof}

\section{A sharp success exponent for the primitive target}
\label{sec:classical-target}

For a fixed approximation factor $\gamma>1$, define the conditional one sample success mass
\begin{equation}
p_{n,\gamma,c}(L)=
\pi^{\rm prim}_{n,c,L}(G_{n,\gamma}(L)).
\label{eq:p-gamma-c}
\end{equation}
The next theorem recalls the established visibility limits and adds the matching subcritical exponent and the resulting sample count threshold.

\begin{thm}[Visibility and the subcritical success exponent]
\label{thm:target-phase}
Let $\gamma>1$ and let $L_n\sim\mu_n$.
\begin{enumerate}
\item If $\gamma^{-2}<c<1$, then
$$
p_{n,\gamma,c}(L_n)\to1
$$
in probability.
Consequently, one ideal primitive Gibbs draw is a $\gamma$ approximate shortest vector with joint probability tending to one.
At $c=\gamma^{-2}$, one has $p_{n,\gamma,c}(L_n)\to1/2$ in probability.
\item If $0<c<\gamma^{-2}$, put
\begin{equation}
\Delta_\gamma(c)=
\frac12\{c\gamma^2-1-\log(c\gamma^2)\}>0.
\label{eq:Delta-gamma}
\end{equation}
Then
$$
-\frac1n\log p_{n,\gamma,c}(L_n)
\to\Delta_\gamma(c)
$$
in probability.
\item In the setting of part 2, let $N_n$ be deterministic with $n^{-1}\log N_n\to r$.
For $N_n$ conditionally independent ideal draws, the joint success probability tends to zero if $r<\Delta_\gamma(c)$ and to one if $r>\Delta_\gamma(c)$.
No assertion is made at equality.
\end{enumerate}
\end{thm}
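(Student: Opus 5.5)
Part 1 is a direct consequence of Lemma \ref{thm:uniform-local-profile}, once $\lambda_1(L_n)$ is replaced by $\GH_n$. By Lemma \ref{lem:shortest-bounds}, for any $\epsilon>0$ we have $(\gamma-\epsilon)\GH_n\le\gamma\lambda_1(L_n)\le(\gamma+\epsilon)\GH_n$ with probability tending to one. On that event
$$
H_{n,\gamma-\epsilon}(L_n)\subset G_{n,\gamma}(L_n)\subset H_{n,\gamma+\epsilon}(L_n).
$$
If $c>\gamma^{-2}$, choose $\epsilon$ so small that $c>(\gamma-\epsilon)^{-2}$. Visibility then gives mass tending to $1$, and the statement about one ideal draw follows immediately.

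At $c=\gamma^{-2}$ the sandwich is too coarse, so we use the $\sqrt n$ scale. By \eqref{eq:shortest-local},
$$
\log(\gamma\lambda_1/\GH_n)=y_c+o_{\pr}(n^{-1/2}).
$$
For every fixed $t>0$ the mass therefore lies between $\Pi_{n,c}((-\infty,y_c-t/\sqrt n])$ and $\Pi_{n,c}((-\infty,y_c+t/\sqrt n])$ with probability tending to one. By Lemma \ref{thm:uniform-local-profile} these converge to $H(\mp t)$. Letting $t\to0$ gives the limit $1/2$.

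For part 2 we estimate the numerator $B_{n,c}((-\infty,a],L_n)$ with $a=\log\gamma+o_{\pr}(n^{-1/2})<y_c$, and we estimate the denominator by Lemma \ref{lem:uniform-partition}, which gives $B_{n,c}=\exp\{nF(c)+o_{\pr}(n)\}$. Again sandwich with the deterministic radii $a_\pm=\log\gamma\pm\delta$.

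\emph{Upper bound on the numerator.} By Lemma \ref{lem:primitive-moments} the mean is $(2\zeta(n))^{-1}n\int_{-\infty}^{a_+}e^{n\Phi_c(y)}\,dy$. Since $a_+<y_c$ and $\Phi_c$ is increasing there, this equals $\exp\{n\Phi_c(a_+)+o(n)\}$. Markov's inequality then gives the upper bound up to a factor $e^{\delta n}$.

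\emph{Lower bound on the numerator.} This is the main obstacle, because the mass near $\log\gamma$ may be exponentially small and fluctuate. I would restrict to the window $[a_--\delta,a_-]$ and apply Chebyshev's inequality. The variance is $n\int e^{n\Psi_c}/(2\zeta(n))$ over the window, so it is governed by $\sup\Psi_c$ there. The relative variance is then at most
$$
\exp\{n(\sup_{\rm window}\Psi_c-2\Phi_c(a_-)+o(1))\}.
$$
Since $2\Phi_c-\Psi_c=y-c\cdot 0\cdot\ldots=y$ exactly ($2y-c(e^{2y}-1)-y+c(e^{2y}-1)=y$), this is $\exp\{-n(a_- - O(\delta))\}$. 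It tends to zero provided $\log\gamma>0$, that is $\gamma>1$, and $\delta$ is small.

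Combining the two bounds gives
$$
-\tfrac1n\log p\to F(c)-\Phi_c(\log\gamma).
$$
A computation gives $\tfrac12\log(1/c)-\tfrac{1-c}{2}-\log\gamma+\tfrac c2(\gamma^2-1)=\Delta_\gamma(c)$. Letting $\delta\to0$ yields convergence in probability.

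For part 3, conditionally on $L_n$ the draws are i.i.d. The failure probability of all $N_n$ draws is $(1-p)^{N_n}$. If $r>\Delta_\gamma(c)$, then with probability tending to one $p\ge e^{-n(r+\Delta_\gamma(c))/2}$, so $N_np\to\infty$ and failure tends to zero. If $r<\Delta_\gamma(c)$, the union bound gives success at most $N_np\to0$ on a high-probability event. Since the joint probability is the expectation over $L_n$ of bounded quantities, both limits follow.
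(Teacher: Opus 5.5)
Your proof is correct and follows the paper's route: the $\lambda_1$ sandwich with the visibility lemma and the $\sqrt n$ local profile for part 1, Markov for the numerator upper bound and Lemma \ref{lem:uniform-partition} for the denominator in part 2, and the bounds $Np$ and $e^{-Np}$ for part 3. The only variation is the numerator lower bound. The paper applies Chebyshev to the unweighted count of primitive classes in a shell $(a-\ell,a]$, where mean equals variance so no $\Psi_c$ computation is needed, and multiplies by the minimal weight $e^{-cn(e^{2a}-1)/2}$. Your weighted Chebyshev argument, using $2\Phi_c-\Psi_c=y$, reaches the same bound with slightly more work.
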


\begin{proof}
For part 1, $c^{-1/2}<\gamma$.
Choose $c^{-1/2}<\rho<\gamma$.
Then $c>\rho^{-2}$, and Lemma \ref{cor:deterministic-visibility} gives $\pi^{\rm prim}_{n,c,L_n}(H_{n,\rho}(L_n))\to1$ in probability.
Lemma \ref{lem:lambda-gh} gives $H_{n,\rho}(L_n)\subset G_{n,\gamma}(L_n)$ with probability tending to one.
This proves the first limit and is the form with a deterministic radius margin of the approximate SVP consequence of \cite{KaminagaThermal}.
At equality, the endpoint in the $Y_n$ coordinate is $y_c+\log\{\lambda_1(L_n)/\GH_n\}$.
Equation \eqref{eq:shortest-local} places it between $y_c-\eta/\sqrt n$ and $y_c+\eta/\sqrt n$ with probability tending to one for every fixed $\eta>0$.
Lemma \ref{thm:uniform-local-profile} and $\eta\downarrow0$ give $1/2$.
This critical limit is also established in \cite[Corollary 16]{KaminagaThermal}.

For part 2, first fix $0<a<y_c$ and $0<\ell<a$.
The number $C_n$ of primitive sign classes with $a-\ell<Y_n\leq a$ has mean and variance
$$
m_n=\frac{e^{na}-e^{n(a-\ell)}}{2\zeta(n)}.
$$
Hence $C_n/m_n\to1$ in probability by Lemma \ref{lem:primitive-moments}.
Each of these classes has weight at least $\exp\{-cn(e^{2a}-1)/2\}$, so
$$
\frac1n\log B_{n,c}(( -\infty,a],L_n)
\geq\Phi_c(a)-o_{\pr}(1).
$$
The reverse exponential bound follows from Markov's inequality and $n\int_{-\infty}^a e^{n\Phi_c(y)}\dd y =\exp\{n\Phi_c(a)+o(n)\}$.
The denominator has logarithmic rate $F(c)$ by Lemma \ref{lem:uniform-partition}.

Choose fixed radii $1<\rho_-<\gamma<\rho_+<c^{-1/2}$.
Lemma \ref{lem:lambda-gh} gives
$$
H_{n,\rho_-}(L_n)\subset G_{n,\gamma}(L_n)
\subset H_{n,\rho_+}(L_n)
$$
with probability tending to one.
Applying the preceding rate at $a=\log\rho_\pm$ and then letting $\rho_\pm\to\gamma$ yields
$$
-\frac1n\log p_{n,\gamma,c}(L_n)
\to F(c)-\Phi_c(\log\gamma)=\Delta_\gamma(c).
$$
Finally, conditional success after $N_n$ independent draws is $1-(1-p_{n,\gamma,c}(L_n))^{N_n}$.
The bounds
$$
1-(1-p)^N\leq Np,
\qquad
(1-p)^N\leq e^{-Np}
$$
give the two limits in part 3, first in probability over lattices and then after averaging.
\end{proof}

\section{Nonzero targets and ordinary Gaussian draws}
\label{sec:full-target}

For a fixed lattice and $\beta>0$, let $\pi^\sharp_{L,\beta}$ and $\pi^{\rm prim}_{L,\beta}$ be the laws proportional to $e^{-\beta\|u\|^2}$ on $L^\sharp$ and $L^\sharp_{\rm prim}$, respectively.
Extend the latter by zero outside the primitive classes.
At $\beta=\beta_n(c)$ it equals $\pi^{\rm prim}_{n,c,L}$.

\begin{prop}[Balanced width comparison]
\label{prop:balanced-width}
Let $L$ be a full rank lattice and let $s_L>0$ be the unique width with $\theta_L(s_L)=2$.
For $b>0$, put $q_b(s)=\pr_{D_{L,s}}\{0<\|V\|\leq b\}$.
Then
\begin{equation}
q_b(s_L)\leq\sup_{s>0}q_b(s)\leq2q_b(s_L).
\label{eq:balanced-width}
\end{equation}
\end{prop}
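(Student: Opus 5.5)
The first inequality is immediate, since $s_L$ is one admissible width. For the second I would split the widths at $s_L$ and use that $\theta_L(s)$ is strictly increasing in $s$: every term $e^{-\pi\|w\|^2/s^2}$ is nondecreasing in $s$, the nonzero terms strictly so, and $\theta_L(s)\to1$ as $s\to0$ and $\to\infty$ as $s\to\infty$. This monotonicity also gives existence and uniqueness of $s_L$. Write the numerator as
$$
A_b(s)=\sum_{w\in L,\,0<\|w\|\leq b}e^{-\pi\|w\|^2/s^2},
$$
so that $q_b(s)=A_b(s)/\theta_L(s)$. The numerator $A_b(s)$ is also nondecreasing in $s$.

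\emph{Case $s\leq s_L$.} Here $\theta_L(s)\geq1$ because the zero vector contributes $1$. Monotonicity of the numerator gives $A_b(s)\leq A_b(s_L)$. Hence
$$
q_b(s)\leq A_b(s_L)=\theta_L(s_L)\,q_b(s_L)=2q_b(s_L).
$$

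\emph{Case $s\geq s_L$.} I would compare termwise through the ratio
$$
e^{-\pi\|w\|^2/s^2}\big/e^{-\pi\|w\|^2/s_L^2}=\exp\{\pi\|w\|^2(s_L^{-2}-s^{-2})\}.
$$
This ratio is nondecreasing in $\|w\|$. The success set consists of the vectors with $0<\|w\|\leq b$, while the complement consists of $0$ together with the vectors of norm $>b$. On the success set the ratio is at most $R:=\exp\{\pi b^2(s_L^{-2}-s^{-2})\}$. On the vectors of norm $>b$ it is at least $R$, and at $0$ it equals $1$.

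The zero vector therefore needs separate handling, and I expect this to be the only slightly delicate point. I would write $\theta_L(s)=1+A_b(s)+T_b(s)$, where $T_b$ is the tail over vectors of norm $>b$. The comparison gives $A_b(s)\leq R\,A_b(s_L)$ and $T_b(s)\geq R\,T_b(s_L)$, so
$$
q_b(s)\leq\frac{A_b(s)}{A_b(s)+T_b(s)}\leq\frac{R A_b(s_L)}{R A_b(s_L)+R T_b(s_L)}=\frac{A_b(s_L)}{\theta_L(s_L)-1}.
$$
The second inequality uses that $x/(x+y)$ is increasing in $x$ and decreasing in $y$. Since $\theta_L(s_L)=2$, the right-hand side equals $A_b(s_L)=2q_b(s_L)$. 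The degenerate case $A_b\equiv0$ is trivial.

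Combining the two cases gives $\sup_{s>0}q_b(s)\leq2q_b(s_L)$.
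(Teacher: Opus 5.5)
Your proof is correct and follows essentially the same route as the paper. It uses the same split at $s_L$, the same bound $q_b(s)\leq A_b(s)\leq A_b(s_L)=2q_b(s_L)$ for $s\leq s_L$, and for $s\geq s_L$ drops the zero weight and uses that the Gaussian weight ratio is increasing in $\|w\|$. The only cosmetic difference is that you compare $s$ with $s_L$ directly through the threshold value $R$ at radius $b$, whereas the paper phrases the same monotone-ratio idea as monotonicity of $A_b(s)/(\theta_L(s)-1)$ in $s$ via pairwise comparisons.
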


\begin{proof}
The function $\theta_L$ is continuous and strictly increasing from $1$ to infinity, so $s_L$ exists and is unique.
Write $A_b(s)=\sum_{0<\|v\|\leq b}e^{-\pi\|v\|^2/s^2}$ and $B(s)=\theta_L(s)-1$.
For $s\leq s_L$, $q_b(s)\leq A_b(s_L)=2q_b(s_L)$.
For $s\geq s_L$, the conditional ball probability $A_b(s)/B(s)$ is nonincreasing in $s$.
Indeed, when $s$ increases the ratio of the two nonzero Gaussian weights is an increasing function of $\|v\|$; summing over pairs inside and outside the ball gives this monotonicity.
Thus $q_b(s)\leq A_b(s)/B(s)\leq A_b(s_L)/B(s_L)=2q_b(s_L)$.
The first inequality in \eqref{eq:balanced-width} is immediate.
\end{proof}

For $b=\lambda_1(L)$ this is the observation following \cite[Proposition 4.3]{ADRS15}, with $s_L=1/\eta_1(L^*)$.
We include the elementary ball version to distinguish this known width comparison from the explicit Haar exponent and the treatment of primitive reduction.
For the latter operation the successful set also contains long multiples and is not a ball.

\begin{prop}[A deterministic primitive comparison]
\label{prop:full-comparison}
Put $a=\beta\lambda_1(L)^2$ and
$$
r(a)=\sum_{m=2}^\infty e^{-a(m^2-1)}.
$$
Then
\begin{equation}
\label{eq:full-tv}
\|\pi^\sharp_{L,\beta}-\pi^{\rm prim}_{L,\beta}\|_{\rm TV}
\leq\frac{r(a)}{1+r(a)}
\leq\frac{e^{-3a}}{1-e^{-5a}}.
\end{equation}
For $b>0$, let $P^\sharp(b)$ and $P^{\rm prim}(b)$ be the respective probabilities of $\|u\|\leq b$.
Then
\begin{equation}
\frac{P^{\rm prim}(b)}{1+r(a)}
\leq P^\sharp(b)\leq(1+r(a))P^{\rm prim}(b).
\label{eq:full-relative}
\end{equation}
\end{prop}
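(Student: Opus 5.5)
The plan is to write both laws in terms of primitive sign classes and their multiples. Every $v\in L^\sharp$ is uniquely $mw$ with $m\geq1$ and $w\in L^\sharp_{\rm prim}$. Put $Z^{\rm prim}=\sum_{w}e^{-\beta\|w\|^2}$ over primitive classes and $Z^\sharp=\sum_{w}\sum_{m\geq1}e^{-\beta m^2\|w\|^2}$. Write $E=Z^\sharp-Z^{\rm prim}$ for the non-primitive mass, that is, the terms with $m\geq2$. The first step is the pointwise estimate
$$\sum_{m\geq2}e^{-\beta m^2\|w\|^2}=e^{-\beta\|w\|^2}\sum_{m\geq2}e^{-\beta(m^2-1)\|w\|^2}\leq e^{-\beta\|w\|^2}r(a).$$
This uses $\|w\|\geq\lambda_1(L)$ and the fact that each exponent $m^2-1$ is positive. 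Summing over $w$ gives $Z^{\rm prim}\leq Z^\sharp\leq(1+r(a))Z^{\rm prim}$, so $E\leq r(a)Z^{\rm prim}$.

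For the total variation bound, note that $\pi^{\rm prim}$ dominates $\pi^\sharp$ on primitive classes, since the two have the same weights there and $\pi^{\rm prim}$ has the smaller normalizer. The total variation distance is therefore the mass $\pi^\sharp$ gives to non-primitive classes, namely $E/Z^\sharp=E/(Z^{\rm prim}+E)$. This is increasing in $E$, and $E\leq r(a)Z^{\rm prim}$, so it is at most $r(a)/(1+r(a))$. For the second inequality in \eqref{eq:full-tv}, first use $r/(1+r)\leq r$. Then use $m^2-1\geq 3+5(m-2)$ for $m\geq2$, which holds because $m^2-1-5m+7=(m-2)(m-3)\geq0$ on integers. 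This gives $r(a)\leq e^{-3a}\sum_{j\geq0}e^{-5aj}=e^{-3a}/(1-e^{-5a})$.

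For \eqref{eq:full-relative}, let $A^\sharp(b)$ and $A^{\rm prim}(b)$ be the unnormalized masses of the ball $\|u\|\leq b$. The upper bound comes from two facts. First, $w$ lies in the ball whenever $mw$ does, so the multiples in the ball are controlled by primitive ball classes: $A^\sharp(b)\leq(1+r(a))A^{\rm prim}(b)$ by the same pointwise estimate. Second, $Z^\sharp\geq Z^{\rm prim}$. Together these give $P^\sharp(b)\leq(1+r(a))P^{\rm prim}(b)$. The lower bound uses $A^\sharp(b)\geq A^{\rm prim}(b)$ and $Z^\sharp\leq(1+r(a))Z^{\rm prim}$.

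The one step needing care is the TV identification, specifically that the signed difference is nonnegative on primitive classes; with that in hand everything is elementary, so I expect no real obstacle.
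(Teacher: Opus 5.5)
Your proposal is correct and follows essentially the same route as the paper: decompose each class as a positive multiple of a primitive class, bound the nonprimitive weight by $r(a)$ times the primitive weight using $\|w\|\ge\lambda_1(L)$, identify the total variation distance with the nonprimitive mass $E/(Z^{\rm prim}+E)$, apply the geometric series via $m^2-1\ge 3+5(m-2)$, and compare ball masses using that the primitive part of a vector in the ball also lies in the ball. Your explicit checks, namely that the difference is nonnegative on primitive classes and that $(m-2)(m-3)\ge0$ holds for integers, fill in details the paper leaves implicit.
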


\begin{proof}
Every nonzero sign class has a unique representation $mu$, with $m\geq1$ and $u$ primitive.
If $W$ is the total primitive weight and $V$ is the total nonprimitive weight, then
$$
V=\sum_{u\in L_{\rm prim}^\sharp}e^{-\beta\|u\|^2}
\sum_{m=2}^\infty e^{-\beta(m^2-1)\|u\|^2}
\leq Wr(a).
$$
The primitive law is the full law conditioned on primitiveness.
Their total variation distance is $V/(W+V)$, which gives the first bound.
For $m\geq2$, $m^2-1\geq3+5(m-2)$, so a geometric-series bound gives the last inequality in \eqref{eq:full-tv}.
Let $W_b$ and $V_b$ be the primitive and nonprimitive weights within radius $b$.
If a multiple lies within this radius, its primitive vector lies there as well.
Thus $V_b\leq r(a)W_b$, while $V\leq r(a)W$.
Apply these bounds to $(W_b+V_b)/(W+V)$ to obtain \eqref{eq:full-relative}.
The argument also covers $W_b=0$.
\end{proof}

For an ordinary Gaussian draw $V$ at inverse temperature $\beta$, the same argument gives, for every $b>0$,
\begin{equation}
\label{eq:primitive-reduction-relative}
\pr\{0<\|V\|\leq b\}
\leq\pr\{0<\|P_L(V)\|\leq b\}
\leq(1+r(a))\pr\{0<\|V\|\leq b\}.
\end{equation}
Indeed, the middle numerator sums the weights of all positive multiples of primitive vectors of norm at most $b$, with both signs counted.
It is at most $1+r(a)$ times the primitive weight in that ball, whereas the first numerator is at least that weight; the common denominator includes zero.
This comparison is relative, so it preserves rare event exponents whenever $a$ grows linearly with $n$.

\begin{cor}[Transfer to all nonzero vectors]
\label{cor:full-transfer}
For fixed $c>0$ and $0<\kappa<1$, except on a set of Haar probability at most $\kappa^n/(2\zeta(n))$,
$$
\|\pi^\sharp_{L_n,\beta_n(c)}-\pi^{\rm prim}_{n,c,L_n}\|_{\TV}
\leq\frac{e^{-3cn\kappa^2/2}}{1-e^{-5cn\kappa^2/2}}.
$$
Thus, for $0<c<1$, the fixed factor visibility limits $0,1/2,1$ transfer to the full nonzero target, as does the local profile for $c_n$ in a compact subset of $(0,1)$.
Moreover, the full nonzero mass of $\|u\|\leq\gamma\lambda_1(L_n)$ divided by $p_{n,\gamma,c}(L_n)$ tends to one in probability.
In particular, it has the same subcritical exponent $\Delta_\gamma(c)$.
\end{cor}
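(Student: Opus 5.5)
The statement to prove is Corollary~\ref{cor:full-transfer}. The plan is to apply Proposition~\ref{prop:full-comparison} with $\beta=\beta_n(c)$ and $a=\beta_n(c)\lambda_1(L_n)^2$, and to control $a$ from below using Lemma~\ref{lem:shortest-bounds}.

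\textbf{Step 1: the total variation bound.} Take $r=\kappa$ in Lemma~\ref{lem:shortest-bounds}. Outside a Haar set of probability at most $\kappa^n/(2\zeta(n))$ we have $\lambda_1(L_n)\geq\kappa\GH_n$. On that event,
$$
a\geq\frac{cn}{2\GH_n^2}\,\kappa^2\GH_n^2=\frac{cn\kappa^2}{2}.
$$
The function $a\mapsto e^{-3a}/(1-e^{-5a})$ is decreasing in $a$. Hence the right side of \eqref{eq:full-tv} is at most the stated quantity. This part is immediate.

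\textbf{Step 2: transfer of visibility limits and local profile.} For fixed $c\in(0,1)$ and any fixed $\kappa$, the TV bound is $e^{-\Omega(n)}$ with probability tending to one. Every event mass, such as $H_{n,\rho}$ or the profile sets $\{Y_n\leq y_{c_n}+t/\sqrt n\}$ extended to nonprimitive classes, then changes by $o(1)$. So the limits of Lemma~\ref{thm:uniform-local-profile} carry over unchanged.

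For deterministic $c_n$ in a compact $I\subset(0,1)$, use $c_n\geq c_0$. Applying Step 1 at $c_0$ gives a uniform bound $e^{-3c_0n\kappa^2/2}/(1-e^{-5c_0n\kappa^2/2})$, since $a\geq c_0 n\kappa^2/2$ on the good event.

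\textbf{Step 3: the ratio for the subcritical target.} Total variation alone is not enough here, because $p_{n,\gamma,c}$ is exponentially small and an additive error of size $e^{-3a}$ could dominate it. Instead I use the relative bound \eqref{eq:full-relative} with $b=\gamma\lambda_1(L_n)$. The primitive ball probability is exactly $p_{n,\gamma,c}(L_n)$, so
$$
(1+r(a))^{-1}\leq\frac{P^\sharp(b)}{p_{n,\gamma,c}(L_n)}\leq 1+r(a).
$$
On the good event, $r(a)\leq e^{-3a}/(1-e^{-5a})\to0$. Therefore the ratio tends to one in probability, for every $c>0$.

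Taking logarithms, $n^{-1}\log$ of the ratio tends to zero. Combined with part 2 of Theorem~\ref{thm:target-phase}, this gives the same exponent $\Delta_\gamma(c)$.

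The only subtlety is remembering to use the relative comparison rather than total variation in this rare-event regime. There is also a minor degenerate case $W_b=0$. It cannot occur, because $b\geq\lambda_1(L_n)$ and a shortest vector is primitive.
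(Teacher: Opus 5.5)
Your proposal is correct and takes essentially the same route as the paper: apply Proposition~\ref{prop:full-comparison} on the event $\lambda_1(L_n)\geq\kappa\GH_n$ from Lemma~\ref{lem:shortest-bounds}, use total variation for the visibility limits and the local profile, and use the relative bound \eqref{eq:full-relative} with $b=\gamma\lambda_1(L_n)$ for the exponentially small subcritical target. Your additional details are steps the paper's four-line proof leaves implicit. These are the monotonicity of $e^{-3a}/(1-e^{-5a})$ in $a$, the uniform lower bound $a\geq c_0n\kappa^2/2$ for varying $c_n\geq c_0$, and the observation that a shortest vector is primitive, so that $p_{n,\gamma,c}>0$.
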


\begin{proof}
On $\lambda_1(L_n)\geq\kappa\GH_n$, use $\beta_n(c)\lambda_1(L_n)^2\geq cn\kappa^2/2$ in Proposition \ref{prop:full-comparison}.
Lemma \ref{lem:shortest-bounds} bounds the exceptional probability.
Total variation controls every length event.
Equation \eqref{eq:full-relative} gives the relative comparison, since $r(a)$ tends to zero on the same set of lattices.
\end{proof}

\subsection{Ordinary discrete Gaussians and conditioning cost}
\label{sec:dgs-conditioning}

Recall the Gaussian convention and width in \eqref{eq:dgs-scale}.
Condition a draw from $D_{L,s_n(c)}$ on being primitive and then identify its sign.
The resulting law is exactly $\pi^{\rm prim}_{n,c,L}$.
For a basis $L=\zz^nB$, primitiveness of $v=zB$ is equivalent to $\gcd(z_1,\ldots,z_n)=1$.
Dividing each nonzero sample by the gcd is a different transformation: the resulting weight at a primitive class is proportional to $\sum_{m\geq1}e^{-\beta_n(c)m^2\|u\|^2}$.

\begin{thm}[Acceptance probability of primitive conditioning]
\label{thm:dgs-conditioning}
Fix $0<c<1$, and put
$$
A_n=s_n(c)^n
=\Gamma(n/2+1)\left(\frac{2}{cn}\right)^{n/2}.
$$
Let $a_n(L)$ be the probability that an ordinary $D_{L,s_n(c)}$ sample is primitive.
Then, for $L_n\sim\mu_n$,
\begin{equation}
\label{eq:dgs-acceptance}
a_n(L_n)=\frac{A_n}{1+A_n}\{1+o_{\pr}(1)\}.
\end{equation}
In particular, $a_n(L_n)\to1$ for $0<c\leq e^{-1}$, whereas for $e^{-1}<c<1$,
\begin{equation}
\label{eq:dgs-rejection}
-\frac1n\log a_n(L_n)
\longrightarrow\frac{1+\log c}{2}
\quad\hbox{in probability}.
\end{equation}
\end{thm}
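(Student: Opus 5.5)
The acceptance probability is the primitive weight divided by the full theta function. For $s=s_n(c)$ and $\beta=\beta_n(c)=\pi/s^2$,
$$
a_n(L)=\frac{2B^{\rm prim}(L)\,e^{-cn/2}}{\theta_L(s)},\qquad \theta_L(s)=1+2B^{\rm prim}(L)e^{-cn/2}\bigl(1+\epsilon_n(L)\bigr).
$$
Here $B^{\rm prim}=B_{n,c}(L)$ is the primitive Gibbs mass, and the prefactor $e^{-cn/2}$ converts $g_{n,c}$ into $e^{-\beta\|u\|^2}$. The quantity $\epsilon_n(L)=V/W$ is the nonprimitive-to-primitive weight ratio from Proposition \ref{prop:full-comparison}. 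It suffices to show that $\epsilon_n(L_n)\to0$ in probability and that
$$
2e^{-cn/2}B_{n,c}(L_n)=A_n\{1+o_{\pr}(1)\}.
$$
Given both, we get $a_n=X/(1+X(1+o(1)))$ with $X=A_n(1+o_{\pr}(1))$. A short case check then yields $a_n=\frac{A_n}{1+A_n}\{1+o_{\pr}(1)\}$ uniformly in the size of $A_n$: when $X$ is large both sides tend to $1$, and when $X$ is small or bounded the relative errors simply multiply.

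The nonprimitive ratio is handled by Corollary \ref{cor:full-transfer}. Outside a set of Haar measure at most $\kappa^n/(2\zeta(n))$ we have $r(a)\le e^{-3cn\kappa^2/2}/(1-e^{-5cn\kappa^2/2})\to0$, and hence $\epsilon_n\le r(a)\to0$.

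The primitive mass is handled by Lemma \ref{lem:uniform-partition} with the constant sequence $c_n=c$. It gives $B_{n,c}(L_n)\sim D_{n,c}/(2\zeta(n))$ in probability. Then $2e^{-cn/2}D_{n,c}/(2\zeta(n))=\Gamma(n/2+1)(2/(cn))^{n/2}/\zeta(n)=A_n/\zeta(n)$, and since $\zeta(n)\to1$ this is $A_n(1+o(1))$. As a consistency check, $A_n=s^n$ matches the Poisson-summation heuristic $\theta_L(s)\approx s^n$ for a unimodular lattice.

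It remains to evaluate the regimes using Stirling's formula:
$$
\frac1n\log A_n\to\tfrac12\log\frac{n}{2e}+\tfrac12\log\frac{2}{cn}=-\tfrac12(1+\log c).
$$
For $c<e^{-1}$ this exponent is positive, so $A_n\to\infty$ and $a_n\to1$. For $c>e^{-1}$ it is negative, so $a_n\sim A_n$ and $-\frac1n\log a_n\to(1+\log c)/2$. At $c=e^{-1}$, Stirling's formula gives $A_n\sim\sqrt{\pi n}\to\infty$, so again $a_n\to1$.

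The main obstacle is that Lemma \ref{lem:uniform-partition} is stated for $c$ in a fixed compact subinterval of $(0,1)$. A fixed $c$ lies in one such interval, so it applies directly; in particular the regime $c>1/2$ is covered only because that lemma splits the mass at $Y_n=0$, where the full variance argument would fail. The care needed is in converting the relative $o_{\pr}(1)$ errors inside $A_n/(1+A_n)$ when $A_n$ is exponentially small. This works because the primitive mass and the $+1$ from the zero vector enter additively, with errors that are relative to each term separately.
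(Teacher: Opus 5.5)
Your proof is correct and follows the paper's own argument essentially line for line: the decomposition $a_n=W_n/(1+W_n+V_n)$, Lemma~\ref{lem:uniform-partition} giving $W_n=2e^{-cn/2}B_{n,c}(L_n)=A_n\{1+o_{\pr}(1)\}$, Proposition~\ref{prop:full-comparison} with the shortest-length bound giving $V_n/W_n\to0$, and Stirling's formula $A_n\sim\sqrt{\pi n}\,(ec)^{-n/2}$ to separate the regimes. One side remark in your last paragraph is slightly off, without affecting the proof: the untruncated second-moment argument fails only for $c>2/e$, since the full relative variance is of order $(ec/2)^{n/2}/\sqrt{n}$. The value $c=1/2$ is merely where the truncated exponent $S(c)$ changes form.
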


\begin{proof}
Let $W_n(L)$ be the Gaussian weight of all primitive vectors with both signs.
Lemma~\ref{lem:uniform-partition} gives
$$
W_n(L_n)=2e^{-cn/2}B_{n,c}(L_n)
=\frac{A_n}{\zeta(n)}\{1+o_{\pr}(1)\}
=A_n\{1+o_{\pr}(1)\}.
$$
By Proposition~\ref{prop:full-comparison} and Lemma~\ref{lem:shortest-bounds}, the weight $V_n$ of nonprimitive nonzero vectors satisfies $V_n/W_n\to0$ in probability.
The zero vector has weight one, so
$$
a_n(L_n)=\frac{W_n}{1+W_n+V_n}.
$$
The relative approximation \eqref{eq:dgs-acceptance} follows for both small and large $A_n$.
Stirling's formula gives
$$
A_n=\sqrt{\pi n}(ec)^{-n/2}\{1+O(n^{-1})\}.
$$
At $c=e^{-1}$ this tends to infinity, as it also does for $c<e^{-1}$.
For $c>e^{-1}$ it decays exponentially, giving \eqref{eq:dgs-rejection}.
\end{proof}

Stirling's formula and \eqref{eq:dgs-scale} give the useful conversion
\begin{equation}
s_n(c)\longrightarrow(ec)^{-1/2}.
\label{eq:width-limit}
\end{equation}
The optimum $c=e^{-1}$ therefore gives a width tending to $1$, but its $n$th power tends to infinity.
Replacing this width by exactly $1$ preserves the exponential rate but changes the acceptance limit to $1/2$.
Indeed, use the deterministic temperature $c_n=2\pi\GH_n^2/n\to e^{-1}$ in Lemma \ref{lem:uniform-partition} and the preceding proof, with $A_n=1$.
Thus asymptotically equal widths need not have the same finite probability behavior.

Zero is responsible for the exponential loss when $c>e^{-1}$.
The nonprimitive nonzero mass is relatively negligible by Proposition \ref{prop:full-comparison}.
Conditioning an approximate Gaussian law on a rare nonzero event can amplify its total variation error, so Section \ref{sec:query-accuracy} states accuracy conditions for the original query law.

\subsection{Optimizing the total number of Gaussian draws}
\label{sec:raw-draws}

High success after conditioning is one possible objective.
To count all ordinary Gaussian draws, it is also necessary to allow temperatures with a small conditional success mass.
For $\gamma\geq1$, use the formula for $\Delta_\gamma(c)$ in \eqref{eq:Delta-gamma} and define
$$
D_\gamma(c)=
\begin{cases}
\Delta_\gamma(c),&0<c<\gamma^{-2},\\
0,&\gamma^{-2}\leq c<1,
\end{cases}
\qquad
I(c)=\max\left\{0,\frac{1+\log c}{2}\right\}.
$$

\begin{thm}[Success exponent for ordinary Gaussian draws]
\label{thm:raw-success}
Fix $\gamma\geq1$ and $0<c<1$.
For $L_n\sim\mu_n$,
\begin{equation}
-\frac1n\log u_{n,\gamma,c}(L_n)
\to K_\gamma(c):=I(c)+D_\gamma(c)
\quad\hbox{in probability}.
\label{eq:raw-rate}
\end{equation}
If $N_n$ is deterministic, $n^{-1}\log N_n\to r$, and the draws are independent conditional on $L_n$, the joint probability of at least one successful draw tends to zero for $r<K_\gamma(c)$ and to one for $r>K_\gamma(c)$.
The minimum of $K_\gamma(c)$ over $0<c<1$ equals $K_\gamma^*$ in \eqref{eq:optimal-raw-rate}.
For $1\leq\gamma<\sqrt e$, the unique minimizer is $c=e^{-1}$.
For $\gamma>\sqrt e$, every $\gamma^{-2}<c\leq e^{-1}$ gives $u_{n,\gamma,c}(L_n)\to1$.
At $\gamma=\sqrt e$ and $c=e^{-1}$, the limit is $1/2$.
\end{thm}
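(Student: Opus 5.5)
The plan is to factor the success probability into a nonzero-generation factor and a conditional success factor, and to control each with results already proved. For fixed $0<c<1$, write
$$
u_{n,\gamma,c}(L)=\frac{T_n}{1+T_n}\cdot\frac{T_n^{\gamma}}{T_n},
$$
where $T_n=W_n+V_n$ is the total Gaussian weight of nonzero vectors at width $s_n(c)$. Here $T_n^{\gamma}$ is the nonzero weight inside radius $\gamma\lambda_1(L)$. The weight of zero is one, so $T_n/(1+T_n)$ is the probability of a nonzero draw.

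\textbf{First factor.} The proof of Theorem~\ref{thm:dgs-conditioning}, together with Proposition~\ref{prop:full-comparison}, gives $T_n=A_n\{1+o_{\pr}(1)\}$. Stirling gives $A_n=\sqrt{\pi n}(ec)^{-n/2}\{1+O(n^{-1})\}$. Hence
$$
-\frac1n\log\frac{T_n}{1+T_n}\to I(c)
$$
in probability.

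\textbf{Second factor.} The ratio $T_n^\gamma/T_n$ is the full nonzero mass $\pi^\sharp_{L_n,\beta_n(c)}\{\|u\|\le\gamma\lambda_1\}$. By Corollary~\ref{cor:full-transfer} it is asymptotic to $p_{n,\gamma,c}(L_n)$, and I treat three cases.
\begin{itemize}
\item For $\gamma>1$ and $c<\gamma^{-2}$, Theorem~\ref{thm:target-phase}(2) gives rate $\Delta_\gamma(c)$.
\item For $c\geq\gamma^{-2}$, the mass tends to $1$ or $1/2$ (part 1 and the transferred visibility limits). Its rate is $0=D_\gamma(c)$.
\item For $\gamma=1$ the target consists of the shortest vectors only, and I handle it directly. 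The mass is at least $e^{-\beta\lambda_1^2}/B_{n,c}$ up to the factor $2e^{-cn/2}$. Since $\log(\lambda_1/\GH_n)=o_{\pr}(1)$ by Lemma~\ref{lem:shortest-bounds}, the rate is at most $F(c)-\Phi_c(0)$, and $F(c)-\Phi_c(0)=\Delta_1(c)$. For the upper bound on the mass, bound the number of primitive vectors within radius $(1+\epsilon)\GH_n$ by Markov (mean $e^{O(\epsilon n)}$), then let $\epsilon\to0$. The same sandwich argument with $\rho_\pm\to1$ also covers $\gamma=1$ in part 2 of Theorem~\ref{thm:target-phase}, because $y=0$ is an interior point where $\Phi_c$ is continuous.
\end{itemize}
Adding the two rates gives \eqref{eq:raw-rate}.

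\textbf{Query threshold.} For $N_n$ draws the proof is identical to part 3 of Theorem~\ref{thm:target-phase}, using $1-(1-u)^N\le Nu$ and $(1-u)^N\le e^{-Nu}$, first in probability over lattices and then after averaging.

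\textbf{Minimization.} On $c\le e^{-1}$ we have $I=0$, and $D_\gamma$ is nonincreasing because $\Delta_\gamma'(c)=\tfrac12(\gamma^2-1/c)<0$ for $c<\gamma^{-2}$. So the minimum over this range is attained at $c=e^{-1}$. For $c>e^{-1}$ with $c<\gamma^{-2}$, the derivative is
$$
K_\gamma'(c)=\frac1{2c}+\frac12\Big(\gamma^2-\frac1c\Big)=\frac{\gamma^2}2>0.
$$
For $c\ge\gamma^{-2}$, $K_\gamma=I$ is increasing. Hence the unique minimizer is $c=e^{-1}$ when $\gamma<\sqrt e$, with value
$$
\Delta_\gamma(e^{-1})=\tfrac12\bigl(\gamma^2/e-1-\log(\gamma^2/e)\bigr)=\gamma^2/(2e)-\log\gamma .
$$

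\textbf{Threshold cases.} For $\gamma>\sqrt e$ and $\gamma^{-2}<c\le e^{-1}$, both factors tend to one, since $A_n\to\infty$ and the visibility limit is $1$. At $\gamma=\sqrt e$ and $c=e^{-1}$, the first factor still tends to one but the critical visibility limit is $1/2$, giving $1/2$.

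\textbf{Main obstacle.} I expect the hardest step to be the exact-SVP case $\gamma=1$, which is outside the stated range of Theorem~\ref{thm:target-phase}. The target there is not sandwiched between fixed balls. The key needed input is the local estimate \eqref{eq:shortest-local}, so that the $o(n)$ errors survive when $\rho_\pm\to1$.
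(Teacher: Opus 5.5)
Your proposal is correct and follows the paper's proof. You use the same factorization into the nonzero-draw probability $A_n/(1+A_n)\{1+o_{\pr}(1)\}$ times the conditional target mass for $\gamma>1$, a Markov count of shortest vectors for $\gamma=1$, and the same piecewise monotonicity for the minimization. For $\gamma=1$ the paper compares the shortest-vector Gaussian mass directly with $\Theta_n$, whose rate is $G(c)$, rather than factoring; the two computations are equivalent.

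One caveat concerns your aside that the $\rho_\pm$ sandwich of Theorem~\ref{thm:target-phase} also covers $\gamma=1$. That claim is false on the lower side. For $\rho_-<1$ the set $H_{n,\rho_-}(L_n)$ is empty with probability tending to one, so the inner inclusion gives nothing. Your direct lower bound from the shortest vector itself makes the aside unnecessary, so the argument is unaffected.
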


\begin{proof}
For $\gamma>1$, use the weights $W_n,V_n$ in the proof of Theorem \ref{thm:dgs-conditioning}.
The probability of a nonzero draw is
$$
\frac{W_n+V_n}{1+W_n+V_n}
=\frac{A_n}{1+A_n}\{1+o_{\pr}(1)\}.
$$
Conditional on a nonzero draw and after identifying signs, its law is the full nonzero target.
By \eqref{eq:full-relative}, its success mass equals $p_{n,\gamma,c}(L_n)\{1+o_{\pr}(1)\}$.
Multiplication and Theorem \ref{thm:target-phase} prove \eqref{eq:raw-rate} for $\gamma>1$, including its critical temperature.

For $\gamma=1$, let $M_n(L)$ be the number of shortest nonzero vectors, counting both signs.
Let $C_{n,R}(L)$ count all nonzero vectors in the ball of radius $R\GH_n$.
Decomposing each vector into a positive integer multiple of a primitive vector and applying Lemma \ref{lem:primitive-moments} gives
$$
\ex C_{n,R}=\frac{R^n}{\zeta(n)}\sum_{m=1}^\infty m^{-n}=R^n.
$$
On $\lambda_1(L_n)\leq R\GH_n$, one has $2\leq M_n\leq C_{n,R}$.
For every fixed $R>1$ and $\varepsilon>0$, Markov's inequality gives
$$
\pr\{C_{n,R}>e^{n\varepsilon}R^n\}\leq e^{-n\varepsilon}.
$$
Lemma \ref{lem:shortest-bounds}, followed by $R\downarrow1$ and $\varepsilon\downarrow0$, therefore gives $n^{-1}\log M_n(L_n)\to0$ in probability.
The Gaussian mass of shortest vectors is
$$
M_n(L_n)\exp\left\{-\frac{cn\lambda_1(L_n)^2}{2\GH_n^2}\right\},
$$
whose logarithm divided by $n$ tends to $-c/2$.
The proof of Theorem \ref{thm:dgs-conditioning} gives
\begin{equation}
\frac1n\log\Theta_n(c,L_n)\to
G(c):=\max\{0,-(1+\log c)/2\}.
\label{eq:theta-rate}
\end{equation}
Thus the exact SVP exponent is $c/2+G(c)=I(c)+\Delta_1(c)$, proving \eqref{eq:raw-rate} also for $\gamma=1$.
Here the formula \eqref{eq:Delta-gamma} defines $\Delta_1(c)$ for $0<c<1$.
The independent draw statement follows from the bounds on $1-(1-u)^{N_n}$ used in Theorem \ref{thm:target-phase}.

Suppose $1\leq\gamma<\sqrt e$.
For $c<e^{-1}$, one has $K_\gamma(c)=\Delta_\gamma(c)$ with derivative $(\gamma^2-c^{-1})/2<0$.
For $e^{-1}<c<\gamma^{-2}$, cancellation gives
$$
K_\gamma(c)=\frac{c\gamma^2}{2}-\log\gamma,
$$
which is strictly increasing.
On the remaining interval $\gamma^{-2}\leq c<1$, when it is nonempty, the rate is $(1+\log c)/2$, also strictly increasing.
The unique minimum is therefore at $e^{-1}$.
For $\gamma\geq\sqrt e$, both nonnegative terms vanish on $[\gamma^{-2},e^{-1}]$.
The success limits follow by multiplying the limits for nonzero acceptance and conditional success.
\end{proof}

\begin{figure}[t]
\centering
\begin{tikzpicture}[x=9cm,y=4.5cm]
\draw[->] (0.08,0) -- (1.03,0) node[right] {$c$};
\draw[->] (0.10,0) -- (0.10,0.66) node[above] {Exponent};
\foreach \x in {0.2,0.4,0.6,0.8,1.0}{
\draw (\x,0) -- (\x,-0.015) node[below] {\small $\x$};}
\foreach \y in {0.2,0.4,0.6}{
\draw (0.10,\y) -- (0.087,\y) node[left] {\small $\y$};}
\draw[densely dashed,gray] (0.367879,0) -- (0.367879,0.59);
\draw[densely dashed,gray] (0.694444,0) -- (0.694444,0.59);
\node[above] at (0.367879,0.59) {\small $e^{-1}$};
\node[above] at (0.694444,0.59) {\small $\gamma^{-2}$};
\draw[blue,thick,domain=0.10:0.367879,samples=80]
plot (\x,{0.5*(1.44*\x-1-ln(1.44*\x))});
\draw[blue,thick,domain=0.367879:0.694444,samples=30]
plot (\x,{0.72*\x-ln(1.2)});
\draw[blue,thick,domain=0.694444:1,samples=30]
plot (\x,{0.5*(1+ln(\x))});
\draw[black,dashed,domain=0.10:0.694444,samples=80]
plot (\x,{0.5*(1.44*\x-1-ln(1.44*\x))});
\draw[black,dashed] (0.694444,0) -- (1,0);
\draw[red,dotted,thick] (0.10,0) -- (0.367879,0);
\draw[red,dotted,thick,domain=0.367879:1,samples=60]
plot (\x,{0.5*(1+ln(\x))});
\fill[blue] (0.367879,0.082552) circle (1.8pt);
\node[anchor=west,blue] at (0.46,0.51) {\small $K_\gamma(c)$};
\node[anchor=west] at (0.12,0.24) {\small $D_\gamma(c)$};
\node[anchor=west,red] at (0.76,0.29) {\small $I(c)$};
\end{tikzpicture}
\caption{Exponents at $\gamma=1.2$.
The solid curve is the rate $K_\gamma(c)=D_\gamma(c)+I(c)$ for ordinary Gaussian draws.
The dashed curve is the rate for the accepted primitive target, and the dotted curve is the conditioning rate.
The minimum total rate is attained at $c=e^{-1}$, below the threshold for high conditional success.
The curves are evaluations of the proved formulas.}
\label{fig:draw-rates}
\end{figure}

For $1<\gamma<\sqrt e$, the difference between the best exponent under high conditional success and the optimized rate is
$$
\left(\frac12-\log\gamma\right)
-\left(\frac{\gamma^2}{2e}-\log\gamma\right)
=\frac{1-\gamma^2/e}{2}>0.
$$
Figure \ref{fig:draw-rates} illustrates this comparison.
The next result makes the fixed-width rate uniform on compact temperature intervals, as needed for the varying widths in Section \ref{sec:oracle-cost}.

\begin{lem}[Uniform Gaussian success exponent]
\label{lem:uniform-raw}
Let $\gamma\geq1$ and let $I\subset(0,1)$ be a fixed compact interval.
Then
\begin{equation}
\label{eq:uniform-raw}
\sup_{c\in I}\left|\frac1n\log u_{n,\gamma,c}(L_n)+K_\gamma(c)\right|
\to0
\quad\hbox{in probability}.
\end{equation}
The same statement holds with $\widehat u$ in place of $u$.
\end{lem}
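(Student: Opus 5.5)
Uniformity comes from a finite mesh and monotonicity in $c$. The limit $K_\gamma$ is continuous on $I$, and Theorem~\ref{thm:raw-success} gives pointwise convergence in probability at each fixed $c$. For a finite mesh $c_0<c_1<\cdots<c_M$ of $I$ with spacing $\delta$, the pointwise statements hold jointly with probability tending to one. It therefore suffices to control the oscillation of $n^{-1}\log u_{n,\gamma,c}(L)$ between neighbouring mesh points by a deterministic $\omega(\delta)\to0$, valid on an event of probability tending to one.

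To do this, write $u=N_c/\Theta_n(c,L)$, where $N_c=\sum_{0<\|v\|\leq\gamma\lambda_1}e^{-\beta_n(c)\|v\|^2}$. Every term of both $N_c$ and $\Theta_n$ is nonincreasing in $c$, so for $c\leq c'\leq c+\delta$,
$$
N_{c'}\leq N_c,\qquad N_{c'}\geq N_c\exp\{-(c'-c)n\gamma^2\lambda_1^2/(2\GH_n^2)\}.
$$
The second inequality uses $\|v\|\leq\gamma\lambda_1$ on the target set. On the event $\lambda_1\leq 2\GH_n$ (Lemma~\ref{lem:shortest-bounds}), this gives $|n^{-1}\log N_{c'}-n^{-1}\log N_c|\leq 2\gamma^2\delta$. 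For the denominator, $\Theta_n$ is nonincreasing in $c$. Moreover, by Poisson summation, $s^{n}\theta_{L}(s)^{-1}$ is comparable to $\theta_{L^*}(1/s)^{-1}$, which is monotone in the other direction. This yields
$$
\Theta_n(c',L)\geq (s_n(c')/s_n(c))^{n}\,\Theta_n(c,L)=(c/c')^{n/2}\Theta_n(c,L).
$$
Hence $|n^{-1}\log\Theta_n(c')-n^{-1}\log\Theta_n(c)|\leq\frac12\log(c'/c)\leq\delta/(2c_0)$ deterministically.

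Combining the two bounds, $\sup_{c\in I}$ of the deviation is at most $\max_j|n^{-1}\log u_{n,\gamma,c_j}+K_\gamma(c_j)|+2\gamma^2\delta+\delta/(2c_0)+\omega_K(\delta)$, where $\omega_K$ is the modulus of continuity of $K_\gamma$. Taking $n\to\infty$ and then $\delta\downarrow0$ proves \eqref{eq:uniform-raw}.

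For $\widehat u$ I will use \eqref{eq:primitive-reduction-relative} with $b=\gamma\lambda_1(L_n)$ and $a=\beta_n(c)\lambda_1^2\geq c_0n\kappa^2/2$ on the event of Corollary~\ref{cor:full-transfer}. This gives $u\leq\widehat u\leq(1+r(a))u$ uniformly in $c\in I$ with $r(a)\to0$, so the logarithmic rates agree uniformly.

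The main obstacle is the denominator comparison, and specifically the claim $\theta_L(s')\geq(s'/s)^n\theta_L(s)$ for $s'\geq s$. This follows from Poisson summation, $\theta_L(s)=s^n\theta_{L^*}(1/s)$ for unimodular $L$, together with the fact that $\theta_{L^*}(1/s)$ is increasing in $s$. If the sign conventions fail there, I would instead rely only on the coarse two-sided monotonicity of $\Theta_n$ and take a finer mesh. In that fallback I would pair it with the rate \eqref{eq:theta-rate} at mesh points, which is continuous in $c$ and squeezes $\Theta_n$ between its neighbouring mesh values.
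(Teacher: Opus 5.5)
Your argument is correct, but it reaches uniformity by a different route from the paper. The paper never estimates how fast $n^{-1}\log u_{n,\gamma,c}$ varies in $c$. It uses only that the numerator $S_n(c,L)$ and the denominator $\Theta_n(c,L)$ are each nonincreasing in $c$, together with their separate pointwise logarithmic limits $G(c)-K_\gamma(c)$ and $G(c)$ (the latter from \eqref{eq:theta-rate}). It squeezes each logarithm between its values at neighbouring mesh points, which works because both limits are continuous, and then takes the difference. You instead prove an explicit Lipschitz bound for $c\mapsto n^{-1}\log u_{n,\gamma,c}(L)$ on $I$, with constant $2\gamma^2+1/(2\min I)$ on the event $\lambda_1(L)\leq 2\GH_n$. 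The numerator part comes from the radius constraint $\|v\|\leq\gamma\lambda_1$, and the denominator part from Poisson summation. You therefore need only the pointwise limit of $u$ itself from Theorem \ref{thm:raw-success} and continuity of $K_\gamma$. Your route gives a quantitative equicontinuity statement, which is convenient for small perturbations of the width. The cost is one lattice-specific identity, whereas the paper's argument is softer and uses nothing beyond monotonicity. Your ``fallback'' is exactly the paper's argument, and your treatment of $\widehat u$ via \eqref{eq:primitive-reduction-relative} on $\lambda_1\geq\kappa\GH_n$ coincides with the paper's.

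One correction: your final paragraph states the key inequality backwards and justifies it with a false monotonicity claim. Since $s^{-n}\theta_L(s)=\theta_{L^*}(1/s)$ and $\theta_{L^*}(1/s)$ is \emph{nonincreasing} in $s$, one has $\theta_L(s)\leq\theta_L(s')\leq(s'/s)^n\theta_L(s)$ for $s\leq s'$. The claim $\theta_L(s')\geq(s'/s)^n\theta_L(s)$ is false: for small $s$ both theta values are close to $1$. Translated through $s_n(c)\propto c^{-1/2}$, the correct inequality is exactly your displayed bound $\Theta_n(c',L)\geq(c/c')^{n/2}\Theta_n(c,L)$ for $c\leq c'$. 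So the proof as displayed stands, and the fallback is not needed.
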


\begin{proof}
Put
$$
S_n(c,L)=\sum_{0<\|v\|\leq\gamma\lambda_1(L)}
             e^{-\beta_n(c)\|v\|^2}.
$$
The functions $S_n(c,L)$ and $\Theta_n(c,L)$ are positive and nonincreasing in $c$ for every fixed lattice.
Equation \eqref{eq:theta-rate} gives the pointwise logarithmic limit $G(c)$ for $\Theta_n$.
Since $S_n=u_{n,\gamma,c}\Theta_n$, Theorem \ref{thm:raw-success} gives
$$
\frac1n\log S_n(c,L_n)\to G(c)-K_\gamma(c).
$$
The two limits are continuous on $I$.
Choose a finite mesh on which the oscillation of each limit between adjacent points is at most $\varepsilon$.
Convergence at the finitely many mesh points holds simultaneously outside an event of probability tending to zero.
Monotonicity bounds each logarithm between its values at the two neighboring mesh points.
Its uniform error is therefore at most the largest mesh error plus $\varepsilon$.
Letting $\varepsilon\downarrow0$ proves uniform convergence of both logarithms.
Their difference gives \eqref{eq:uniform-raw}.
The same limit holds with $\widehat u$ in place of $u$.
Indeed, on $\lambda_1(L_n)\geq\kappa\GH_n$ for any fixed $0<\kappa<1$, the factor in \eqref{eq:primitive-reduction-relative} tends to one uniformly for $c\in I$, since $a\geq n\kappa^2\min I/2$.
Lemma \ref{lem:shortest-bounds} makes the exceptional probability tend to zero.
\end{proof}

\subsection{An explicit converse uniform over all widths}
\label{sec:finite-converse}

The next bound treats primitive reduction directly and has explicit finite dimensional constants.
It uses only the first primitive moment, the shortest length bounds, and Poisson summation.

\begin{thm}[Finite converse with primitive reduction]
\label{thm:finite-converse}
Let $n\geq3$, $\gamma\geq1$, $k=n/2$, and
$$
\alpha_n=\frac{\sqrt{n/(2\pi)}}{\GH_n}.
$$
Choose $0<\eta<1$, $0<\kappa<1<R$, and put $\rho=\gamma R$.
Assume $\alpha_n/2\leq\kappa$ and $\rho\leq\alpha_n$.
Define
\begin{eqnarray}
M_1&=&\frac{P(k,\pi\rho^2\GH_n^2)-P(k,\pi\kappa^2\GH_n^2)}{\zeta(n)},\nonumber\\
M_2&=&\frac{n(k/e)^k}{\Gamma(k+1)}
       \frac{\zeta(n)-1}{\zeta(n)}\log\frac{\rho}{\kappa},\nonumber\\
U&=&\min\{1,(M_1+M_2)/\eta\},\qquad
E_+=\eta+\frac{\kappa^n}{2\zeta(n)}+\frac{2\zeta(n)}{R^n}.
\label{eq:finite-converse-constants}
\end{eqnarray}
Then
\begin{equation}
\pr_{\mu_n}\left\{\sup_{c>0}\widehat u_{n,\gamma,c}(L)>U\right\}\leq E_+.
\label{eq:finite-converse}
\end{equation}
The same upper bound holds without primitive reduction.
\end{thm}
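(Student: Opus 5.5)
The plan is to bound $\sup_{c>0}\widehat u_{n,\gamma,c}(L)$ by a random sum $S(L)$ over primitive sign classes in a fixed annulus, on a good event for $\lambda_1(L)$. The sum is chosen so that its expectation is exactly $(M_1+M_2)\cdot(\hbox{const})$ by Lemma \ref{lem:primitive-moments}, and Markov's inequality then gives the $\eta$ term. Since $c\mapsto s_n(c)$ is a bijection onto $(0,\infty)$, the supremum over $c$ is a supremum over widths $s>0$, and it will be taken vector by vector.

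\textbf{Step 1 (denominator).} Haar lattices are unimodular, so Poisson summation gives $\theta_L(s)=s^n\theta_{L^*}(1/s)\geq s^n$. Trivially $\theta_L(s)\geq1$, hence $\theta_L(s)\geq\max\{1,s^n\}$. Consequently, for every nonzero $v$,
$$\sup_{s>0}D_{L,s}(v)\leq h(\|v\|):=\sup_{s>0}\frac{e^{-\pi\|v\|^2/s^2}}{\max\{1,s^n\}}.$$
For $s\leq1$ the quantity is at most $e^{-\pi x^2}$, where $x=\|v\|$. For $s\geq1$ it is at most $e^{-\pi x^2/s^2}s^{-n}$, whose unconstrained maximum is at $s^2=2\pi x^2/n$ and equals $(k/(e\pi x^2))^k$. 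This maximizer satisfies $s\le 1$ exactly when $x\leq\alpha_n\GH_n$, and in that case the constrained supremum over $s\ge 1$ is at $s=1$. Thus $h(x)=e^{-\pi x^2}$ for $x\leq\alpha_n\GH_n$, and $h(x)\leq(k/(e\pi x^2))^k$ for all $x$.

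\textbf{Step 2 (good event and decomposition).} Let $\mathcal G=\{\kappa\GH_n\leq\lambda_1(L)\leq R\GH_n\}$. By Lemma \ref{lem:shortest-bounds}, $\pr(\mathcal G^c)\leq\kappa^n/(2\zeta(n))+2\zeta(n)/R^n$. On $\mathcal G$, a reduced sample succeeds only if $P_L(V)=\pm u$ for a primitive class $u$ with $\kappa\GH_n\leq\|u\|\leq\rho\GH_n$, which means $V=\pm mu$ for some $m\geq1$. Bounding the supremum of the sum by the sum of the suprema, we get
$$\sup_{c>0}\widehat u_{n,\gamma,c}(L)\leq S(L):=\sum_{u}2\Big[h(\|u\|)+\sum_{m\geq2}h(m\|u\|)\Big],$$
where $u$ ranges over primitive sign classes in the annulus. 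For the direct term, $\|u\|\leq\rho\GH_n\leq\alpha_n\GH_n$, so $h=e^{-\pi\|u\|^2}$. For the multiples we use the crude bound $h(m\|u\|)\leq m^{-n}(k/(e\pi\|u\|^2))^k$; the hypothesis $\alpha_n/2\leq\kappa$ guarantees $m\|u\|\geq\alpha_n\GH_n$, so this bound is in fact the exact form of $h$ there. Summing over $m\geq2$ produces the factor $\zeta(n)-1$.

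\textbf{Step 3 (first moment).} Lemma \ref{lem:primitive-moments} gives $\ex S=\zeta(n)^{-1}\int_{\rm annulus}\big[e^{-\pi|x|^2}+(\zeta(n)-1)(k/(e\pi|x|^2))^k\big]dx$. The Gaussian part, in polar coordinates with $t=\pi r^2$, equals $P(k,\pi\rho^2\GH_n^2)-P(k,\pi\kappa^2\GH_n^2)$, which yields $M_1$. The second integrand is $\propto r^{-n}$, so in polar coordinates its integral is $nv_n(k/(e\pi))^k\log(\rho/\kappa)$. With $v_n=\pi^k/\Gamma(k+1)$ this is exactly $M_2$. Hence $\ex S=M_1+M_2$, and Markov's inequality gives $\pr\{S>(M_1+M_2)/\eta\}\leq\eta$.

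\textbf{Step 4 (conclusion).} On $\mathcal G$ minus this Markov event, $\sup_c\widehat u\leq\min\{1,(M_1+M_2)/\eta\}=U$, since a probability is at most $1$. A union bound gives total exceptional probability at most $E_+$. Without primitive reduction, the success event is contained in the reduced one (a short nonzero $V$ has a short primitive part), so the same bound holds. The main obstacle is Step 1 together with the sum–sup interchange: one must check that taking the supremum over $s$ separately for each vector, including the zero mass through $\max\{1,s^n\}$, still leaves an integrable function on the annulus. The only delicate point is the $r^{-n}$ singularity of the multiple term, which the truncation at $\kappa$ removes and which produces the $\log(\rho/\kappa)$ factor.
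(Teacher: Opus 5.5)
Your proof is correct and follows the paper's argument essentially step for step: the Poisson-summation bound $\theta_L(s)\geq\max\{1,s^n\}$, the per-vector envelope optimized over all widths, the sum over primitive classes in the annulus $\kappa\GH_n\leq\|w\|\leq\rho\GH_n$ together with all their multiples, the primitive first moment giving exactly $M_1+M_2$, and Markov's inequality combined with Lemma~\ref{lem:shortest-bounds} in a union bound. Your observation that $h(x)\leq(k/(e\pi x^2))^k$ holds for every $x$ is also correct. The hypothesis $\alpha_n/2\leq\kappa$ is therefore needed only to make the multiple term exact, not for the upper bound itself.
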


\begin{proof}
Poisson summation for a unimodular lattice gives
$\theta_L(s)=s^n\theta_{L^*}(1/s)\geq\max\{1,s^n\}$.
For $r>0$, direct maximization gives
\begin{equation}
H_n(r):=\sup_{s>0}\frac{e^{-\pi r^2/s^2}}{\max\{1,s^n\}}
=\begin{cases}
e^{-\pi r^2},&r\leq\sqrt{n/(2\pi)},\\
\bigl(n/(2\pi e r^2)\bigr)^{n/2},&r\geq\sqrt{n/(2\pi)}.
\end{cases}
\label{eq:pointwise-envelope}
\end{equation}
For $s\leq1$ the numerator is increasing in $s$.
For $s\geq1$ the logarithmic derivative is $-n/s+2\pi r^2/s^3$, giving \eqref{eq:pointwise-envelope}.

On $\kappa\GH_n\leq\lambda_1(L)\leq R\GH_n$, every successful primitive vector $w$, counting both signs, satisfies $\kappa\GH_n\leq\|w\|\leq\rho\GH_n$.
Consequently, simultaneously for every $s>0$, its successful mass is bounded above by
$$
Z(L):=2\sum_{w\in L^\sharp_{\rm prim},\,
                 \kappa\GH_n\leq\|w\|\leq\rho\GH_n}
                 \sum_{m\geq1}H_n(m\|w\|).
$$
The factor $2$ counts the two signs.
The hypotheses on $\kappa,\rho$ imply that $m=1$ uses the first branch of \eqref{eq:pointwise-envelope}, whereas every $m\geq2$ uses the second branch.
Thus, for $\kappa\GH_n\leq r\leq\rho\GH_n$,
$$
\sum_{m\geq1}H_n(mr)
=e^{-\pi r^2}+(\zeta(n)-1)\left(\frac{n}{2\pi e}\right)^{n/2}r^{-n}.
$$
Apply the primitive first moment in Lemma \ref{lem:primitive-moments}.
The radial Gaussian integral is the difference of gamma probabilities in $M_1$.
The second integral is
$$
\frac{\zeta(n)-1}{\zeta(n)}
\left(\frac{n}{2\pi e}\right)^{n/2}
n v_n\int_{\kappa\GH_n}^{\rho\GH_n}\frac{dr}{r}=M_2.
$$
All sums are nonnegative, and the annulus is bounded away from zero, so these interchanges are justified by Tonelli's theorem.
We obtain $\ex Z=M_1+M_2$.
Markov's inequality and Lemma \ref{lem:shortest-bounds} give \eqref{eq:finite-converse} by a union bound.
Clipping at $1$ preserves the upper bound.
\end{proof}

\begin{cor}[Matching exponential rate of the finite converse]
\label{cor:finite-converse-rate}
Fix $1\leq\gamma<\sqrt e$.
In Theorem \ref{thm:finite-converse}, take
$$
\eta_n=n^{-1},\qquad \kappa_n=n^{-1/n},\qquad R_n=n^{1/n}.
$$
Its hypotheses hold for all sufficiently large $n$, $E_{+,n}\to0$, and
$$
-\frac1n\log U_n\longrightarrow K_\gamma^*.
$$
\end{cor}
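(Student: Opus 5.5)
The plan is to verify the hypotheses and $E_{+,n}\to0$ directly from Stirling asymptotics for $\GH_n$. Then we compute the exponential rates of $M_1$ and $M_2$ separately and show that $M_1$ dominates.

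\emph{Hypotheses and $E_+$.} Stirling gives $\GH_n^2=\frac{n}{2\pi e}\{1+o(1)\}$, so $\alpha_n\to\sqrt e$. Since $\kappa_n=n^{-1/n}\to1>\sqrt e/2$, the condition $\alpha_n/2\leq\kappa_n$ holds for large $n$. Likewise $\rho_n=\gamma n^{1/n}\to\gamma<\sqrt e$, so $\rho_n\leq\alpha_n$ eventually; this is where $\gamma<\sqrt e$ enters. Moreover
$$
E_{+,n}=\frac1n+\frac{1}{2n\zeta(n)}+\frac{2\zeta(n)}{n}\to0 .
$$

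\emph{Rate of $M_1$.} Write $k=n/2$ and note $\pi\GH_n^2=\frac{k}{e}\{1+o(1)\}$. The upper argument is then $x_n=\lambda_n k$ with $\lambda_n\to\lambda=\gamma^2/e<1$. The standard lower-tail rate $-\frac1k\log P(k,\lambda_n k)\to\lambda-1-\log\lambda$ holds for $\lambda_n\to\lambda<1$, by bounding the integrand between its endpoint value times the interval length and the integral itself. It gives
$$
-\frac1n\log P(k,\pi\rho_n^2\GH_n^2)\to\tfrac12\bigl(\gamma^2/e-1-\log(\gamma^2/e)\bigr)=\frac{\gamma^2}{2e}-\log\gamma=K_\gamma^*.
$$
Since $M_1$ is at most this term (as $\zeta(n)\geq1$), we get $\liminf -\frac1n\log M_1\geq K_\gamma^*$.

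For the matching lower bound on $M_1$ we cannot simply drop the subtracted term. When $\gamma=1$ the two arguments have the same limiting ratio $1/e$, so both terms have the same exponent. Instead we bound the difference below by the gamma density at the lower endpoint $x_-=\pi\kappa_n^2\GH_n^2$ times the interval length; the density is increasing there because $x<k-1$. The interval length is
$$
x_+-x_-=\pi\GH_n^2(\rho_n^2-\kappa_n^2)\geq\pi\GH_n^2\kappa_n^2(n^{4/n}-1)\asymp\log n .
$$
The density $x_-^{k-1}e^{-x_-}/\Gamma(k)$ has exponential rate $\tfrac12(\lambda_--1-\log\lambda_-)$ with $\lambda_-\to1/e$, which equals $1/(2e)$. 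This settles $\gamma=1$. For $\gamma>1$ the simpler route works: the subtracted term has the strictly larger rate $1/(2e)>K_\gamma^*$ and is negligible. Either way $-\frac1n\log M_1\to K_\gamma^*$. I expect this near-cancellation at $\gamma=1$ to be the main point needing care. Some polynomial bookkeeping is required, but the choice $\kappa_n=n^{-1/n}$, $R_n=n^{1/n}$ keeps the gap only logarithmically wide, so nothing exponential is lost.

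\emph{Negligibility of $M_2$ and conclusion.} By Stirling, $n(k/e)^k/\Gamma(k+1)\sim n/\sqrt{2\pi k}$. We also have $\log(\rho_n/\kappa_n)=\log\gamma+2\log n/n=O(1)$ and $(\zeta(n)-1)/\zeta(n)\leq 2^{-n}\{1+o(1)\}$. Hence $M_2\leq 2^{-n}n^{O(1)}$, which has rate $\log2$. Since $\log 2>1/(2e)\geq K_\gamma^*$, we get $M_2=o(M_1)$ exponentially. Finally $\eta_n=1/n$ contributes only a polynomial factor, and $(M_1+M_2)/\eta_n\to0$, so the clipping at $1$ is eventually inactive. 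Therefore
$$
-\frac1n\log U_n=-\frac1n\log(M_1+M_2)-\frac{\log n}{n}\to K_\gamma^* .
$$
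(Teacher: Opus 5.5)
Your proof is correct and follows essentially the paper's route. You use Stirling for the hypotheses and $E_{+,n}$, and the gamma lower-tail rate, which is the Laplace exponent $\log x-x^2/(2e)$ of the paper's radial integral after the substitution $t=\pi\GH_n^2x^2$; you treat $\gamma=1$ separately by an integrand-rate-times-length bound over the logarithmically thin window, and you show that $M_2=2^{-n+o(n)}$ is negligible because $K_\gamma^*\leq 1/(2e)<\log 2$.
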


\begin{proof}
Stirling's formula gives $\alpha_n\to\sqrt e$, so the geometric hypotheses hold eventually.
The three terms in $E_{+,n}$ tend to zero.
Writing $r=x\GH_n$ in the first moment gives
$$
M_1=\frac{n}{\zeta(n)}\int_{\kappa_n}^{\gamma R_n}
          x^{n-1}e^{-\pi\GH_n^2x^2}\,dx.
$$
Here $\pi\GH_n^2/n\to1/(2e)$.
For $\gamma>1$, the limiting exponent $\log x-x^2/(2e)$ is strictly increasing on $[1,\gamma]$.
An upper bound by its maximum and a lower bound from a fixed interval near $\gamma$ give
$n^{-1}\log M_1\to\log\gamma-\gamma^2/(2e)$.
For $\gamma=1$, both endpoints tend to $1$, the integrand has rate $-1/(2e)$ uniformly on the interval, and the interval length is asymptotic to $2\log n/n$.
The same limit follows.
Finally, $\zeta(n)-1=2^{-n}\{1+o(1)\}$ and
$n(k/e)^k/\Gamma(k+1)=\sqrt{n/\pi}\{1+O(n^{-1})\}$.
The logarithmic factor in $M_2$ is bounded above and below by subexponential factors, so $M_2=2^{-n+o(n)}$.
Since $K_\gamma^*\leq1/(2e)<\log2$, $M_2$ does not affect the rate of $M_1+M_2$.
Division by $\eta_n$ and clipping at $1$ give the assertion.
\end{proof}

\begin{proof}[Proof of Theorem \ref{thm:adaptive-oracle}]
For $1\leq\gamma<\sqrt e$, Theorem \ref{thm:finite-converse} and Corollary \ref{cor:finite-converse-rate} give, on a common lattice set of probability tending to one,
$$
\sup_{c>0}\widehat u_{n,\gamma,c}(L)\leq e^{-n(K_\gamma^*-\delta)}
$$
for every fixed $\delta>0$ and all sufficiently large $n$.
Theorem \ref{thm:raw-success} at $c=e^{-1}$ gives the matching rate without primitive reduction.
Since $u\leq\widehat u$, both suprema have rate $K_\gamma^*$.
For $\gamma\geq\sqrt e$, the bound by $1$ and the same fixed-width rate give the two limits with exponent zero.

The common per-query bound holds conditional on every history and every allowed choice of width.
For $r<K_\gamma^*$ choose $0<\delta<K_\gamma^*-r$.
The union bound gives conditional success at most $N_ne^{-n(K_\gamma^*-\delta)}$ on the good set, and its complement has probability tending to zero.
For $r>K_\gamma^*$, independent draws at $c=e^{-1}$ attain success tending to one by Theorem \ref{thm:raw-success}.
Returning the shortest nonzero sample achieves this probability without knowing $\lambda_1(L)$.
\end{proof}

\section{Cryptographic interpretation of the main theorem}
\label{sec:oracle-cost}

Theorem \ref{thm:adaptive-oracle} identifies the sample requirement of direct Gaussian SVP search.
We now combine it with generation costs to answer two algorithmic questions: how does it recover the Pouly--Shen tradeoff, and what must a sampler at the optimal width achieve to improve that tradeoff?
These are conditional cost compositions: the sampling probabilities are evaluated in the real Haar model, and generation costs are supplied by a specified implementation.
The cited finite-input samplers retain their own input conventions, including polynomial dependence on the basis encoding length.
We suppress such factors in dimension exponents only when they are $\exp(o(n))$.
A bit-complexity theorem for a finitely represented lattice ensemble additionally requires a transfer of the Haar probability estimates; no such transfer is assumed here.
All cost exponents below use natural logarithms.

\subsection{From sample exponents to search costs}
\label{sec:cost-composition}

Suppose individual independent draws at a fixed $c$ cost
$$
T_n(c)=\exp\{n\tau(c)+o(n)\},
$$
including the work needed to inspect an output.
Using a deterministic number of draws with exponent $r>K_\gamma(c)$ gives success tending to one and time exponent $\tau(c)+r$.
The infimum exponent for this implementation at $c$ is therefore $\tau(c)+K_\gamma(c)$.
A temperature minimizing $K_\gamma$ alone minimizes this sum when $\tau$ is constant, but need not do so when generation cost depends on temperature.

For batch generation, suppose one call produces $M_n(c)$ independent draws at cost $C_n(c)$, with independent calls and
$$
\frac1n\log M_n(c)\to\kappa(c),\qquad
\frac1n\log C_n(c)\to b(c).
$$
Include the cost of examining the batch in $C_n(c)$.
Using $\exp\{n\ell+o(n)\}$ calls produces a list with exponent $\ell+\kappa(c)$, so the infimum time exponent for repeated full batches is
\begin{equation}
\label{eq:batch-cost}
b(c)+\max\{0,K_\gamma(c)-\kappa(c)\}.
\end{equation}
Indeed, any $\ell>\max\{0,K_\gamma(c)-\kappa(c)\}$ gives success tending to one, whereas $\ell+\kappa(c)<K_\gamma(c)$ gives failure.
The infimum allows arbitrarily small positive slack; at equality a single batch need not suffice.
If each call has joint error at most $\eta_n$, $B_n$ independent calls change list success by at most $B_n\eta_n$.

\subsection{Comparison with the Pouly--Shen algorithm}
\label{sec:pouly-shen}

We now compare two Gaussian generation regimes from \cite{ADRS15}.
The general sampler in Theorem 3.7 of its full version produces $2^{n/2}$ samples in time $2^{n+o(n)}$.
Taking its confidence parameter to be $n^2$ makes the joint error $\exp(-\Omega(n^2))$.
Here $\kappa=(\log2)/2$ and $b=\log2$.
Since $K_\gamma^*\leq1/(2e)<(\log2)/2$ for every fixed $\gamma\geq1$, one ideal batch at $c=e^{-1}$ succeeds on typical Haar lattices.
Its construction cost remains $2^{n+o(n)}$.

Pouly and Shen \cite{PoulyShen26} instead use the faster sampler available above $\sqrt2\eta_{1/2}(L)$, where $\eta_{1/2}$ is the smoothing parameter.
Their smoothing estimate permits the deterministic width
$$
\sigma_n=\sqrt2(6+4\sqrt2)^{1/n}
$$
outside an exceptional set of Haar measure tending to zero.
This sampler has batch size and expected time $2^{n/2+o(n)}$.
For this comparison, batch cost means expected generation time plus inspection.
Its Gaussian convention is the same as \eqref{eq:dgs-scale}.
The following consequence gives a direct comparison on our logarithmic scale.

\begin{cor}[Draw rate at the smoothing based width]
\label{cor:smoothing-comparison}
Fix $\gamma\geq1$ and set
$$
c_n=\frac{2\pi\GH_n^2}{n\sigma_n^2},
\qquad c_0=\frac1{2e},
\qquad L_\gamma=K_\gamma(c_0).
$$
For $V\sim D_{L_n,\sigma_n}$, the probability of $0<\|V\|\leq\gamma\lambda_1(L_n)$ has exponent $L_\gamma$ in probability over lattices.
For $1\leq\gamma<\sqrt{2e}$,
\begin{equation}
L_\gamma=\frac{\log2}{2}-\log\gamma+\frac{\gamma^2}{4e}.
\label{eq:smoothing-draw-rate}
\end{equation}
For ideal independent batches with $b=\kappa=(\log2)/2$, the infimum time exponent for repeated full batches is
\begin{equation}
T_\gamma=\max\left\{\frac{\log2}{2},L_\gamma\right\}.
\label{eq:smoothing-batch-rate}
\end{equation}
\end{cor}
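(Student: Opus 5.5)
The plan has four steps: identify the temperature of the width $\sigma_n$, apply the uniform fixed-width rate, evaluate $K_\gamma$ at $1/(2e)$, and then insert the result into the batch formula \eqref{eq:batch-cost}.

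\emph{Temperature.} By \eqref{eq:dgs-scale}, the width $\sigma_n$ corresponds to the deterministic temperature $c_n=2\pi\GH_n^2/(n\sigma_n^2)$. Stirling's formula gives $2\pi\GH_n^2/n\to e^{-1}$, the same computation behind \eqref{eq:width-limit}. Since $\sigma_n^2\to2$, this yields $c_n\to c_0=1/(2e)$. Therefore $c_n$ eventually lies in a fixed compact interval $I\subset(0,1)$ around $c_0$.

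\emph{Exponent of the success probability.} Lemma \ref{lem:uniform-raw} gives, uniformly for $c\in I$,
\[
\frac1n\log u_{n,\gamma,c}(L_n)=-K_\gamma(c)+o_{\pr}(1).
\]
Evaluating at $c=c_n$ gives $-\frac1n\log u_{n,\gamma,c_n}(L_n)=K_\gamma(c_n)+o_{\pr}(1)$. The function $K_\gamma$ is continuous on $(0,1)$, as a sum of continuous pieces that agree at the breakpoints. Hence $K_\gamma(c_n)\to K_\gamma(c_0)=L_\gamma$. This proves that the draw probability has exponent $L_\gamma$ in probability. The uniformity in the lemma is the only delicate point, because $c_n$ varies with $n$, and the lemma is stated exactly to cover that.

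\emph{Evaluation of $K_\gamma(c_0)$.} Because $c_0<e^{-1}$, we have $I(c_0)=0$. The condition $c_0<\gamma^{-2}$ is equivalent to $\gamma<\sqrt{2e}$. In that range,
\[
D_\gamma(c_0)=\Delta_\gamma(c_0)=\tfrac12\{\gamma^2/(2e)-1-\log(\gamma^2/(2e))\}.
\]
Expanding, $-\tfrac12\log(\gamma^2/(2e))=-\log\gamma+\tfrac12\log2+\tfrac12$. The two constant terms $\pm\tfrac12$ cancel, which gives
\[
\tfrac{\log2}{2}-\log\gamma+\tfrac{\gamma^2}{4e},
\]
and this is \eqref{eq:smoothing-draw-rate}.

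\emph{Batch exponent.} Ideal batches have $b=\kappa=(\log2)/2$. Substituting into \eqref{eq:batch-cost} gives
\[
\tfrac{\log2}{2}+\max\{0,L_\gamma-\tfrac{\log2}{2}\}=\max\{\tfrac{\log2}{2},L_\gamma\},
\]
which is \eqref{eq:smoothing-batch-rate}.

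The justification of \eqref{eq:batch-cost} carries over to the varying temperature $c_n$. A list of $\exp\{n(\ell+\kappa)+o(n)\}$ independent draws succeeds with probability tending to one when $\ell+\kappa>L_\gamma$. It fails when $\ell+\kappa<L_\gamma$. Both claims follow from the bounds $1-(1-u)^N\le Nu$ and $(1-u)^N\le e^{-Nu}$ used in Theorem \ref{thm:target-phase}, together with the rate just established at $c_n$. The only step needing care is confirming that Lemma \ref{lem:uniform-raw} applies to the deterministic sequence $c_n$. Everything else is algebra.
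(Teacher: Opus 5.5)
Your proposal is correct and follows the same route as the paper: $c_n\to c_0$ via the width limit, Lemma \ref{lem:uniform-raw} together with continuity of $K_\gamma$ for the rate, direct substitution using $I(c_0)=0$ and $c_0\gamma^2<1$, and then \eqref{eq:batch-cost}. Your extra check that the justification of \eqref{eq:batch-cost} carries over to the varying temperature $c_n$ is a detail the paper leaves implicit.
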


\begin{proof}
Equation \eqref{eq:width-limit} gives $c_n\to c_0$.
The temperatures lie in a fixed compact subinterval of $(0,1)$ for all large $n$.
Lemma \ref{lem:uniform-raw} and continuity of $K_\gamma$ therefore give the stated rate.
For $1\leq\gamma<\sqrt{2e}$, one has $I(c_0)=0$ and $c_0\gamma^2<1$.
Substitution in \eqref{eq:raw-rate} gives \eqref{eq:smoothing-draw-rate}.
Equation \eqref{eq:batch-cost} gives \eqref{eq:smoothing-batch-rate}.
\end{proof}

The equality $L_\gamma=(\log2)/2$ has the root $\gamma_0=1.1229739\ldots$ in $(1,\sqrt{2e})$.
For $\gamma>\gamma_0$, one ideal batch at $\sigma_n$ suffices; below that value additional batches are needed on the exponential scale.
At $\gamma=1$, equation \eqref{eq:smoothing-batch-rate} gives
$$
\frac{T_1}{\log2}=\frac12+\frac{1}{4e\log2}=0.6326845\ldots.
$$
This recovers the approximate and exact SVP time tradeoff of \cite{PoulyShen26}.
An error $\exp(-\Omega(n^2))$ in the joint law of each batch remains negligible after exponentially many calls, as required above.

\subsection{A cost target at the optimal width}
\label{sec:sampler-target}

The sample optimal width $s_n(e^{-1})$ tends to $1$, whereas the width $\sigma_n$ justified by the Pouly--Shen smoothing bound tends to $\sqrt2$.
The following consequence states precisely what a sampler at the smaller width would need to deliver.

\begin{cor}[Generation cost needed to improve the benchmark]
\label{cor:sampler-budget}
Fix $\gamma\geq1$.
Suppose independent ordinary Gaussian draws at $c=e^{-1}$ can be generated and inspected at cost $\exp\{n\tau_*+o(n)\}$ per draw.
Then the infimum time exponent for direct search with deterministic query budgets and success tending to one is $\tau_*+K_\gamma^*$.
It is below the benchmark $T_\gamma$ in \eqref{eq:smoothing-batch-rate} exactly when
\begin{equation}
\tau_*<T_\gamma-K_\gamma^*.
\label{eq:sampler-budget}
\end{equation}
For repeated full independent batches at this width with size exponent $\kappa_*$ and cost exponent $b_*$, the corresponding condition is
$$
b_*+\max\{0,K_\gamma^*-\kappa_*\}<T_\gamma.
$$
These conclusions also hold under the accuracy conditions below.
\end{cor}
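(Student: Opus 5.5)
The proof is a cost composition, and I would carry it out by reusing the two directions of Theorem \ref{thm:adaptive-oracle} at the width $s_n(e^{-1})$ together with the cost formula \eqref{eq:batch-cost}.

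\textbf{Achievability.} Fix any $r>K_\gamma^*$. Draw $N_n=\lceil e^{nr}\rceil$ independent samples at $c=e^{-1}$ and return the shortest nonzero one. Theorem \ref{thm:raw-success} (equivalently the last clause of Theorem \ref{thm:adaptive-oracle}) gives joint success tending to one. The cost is $N_n\exp\{n\tau_*+o(n)\}=\exp\{n(\tau_*+r)+o(n)\}$, since the per-draw cost already includes inspection. Letting $r\downarrow K_\gamma^*$ shows that the infimum exponent is at most $\tau_*+K_\gamma^*$.

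\textbf{Converse within this implementation.} Any deterministic budget with time exponent below $\tau_*+K_\gamma^*$ makes at most $e^{nr+o(n)}$ draws with $r<K_\gamma^*$. This is because every draw costs at least $\exp\{n\tau_*+o(n)\}$; I read the hypothesis as a two-sided rate, so costs are not much below this either. The union bound in the proof of Theorem \ref{thm:adaptive-oracle}, or Theorem \ref{thm:raw-success} at the fixed width, then gives success tending to zero. The one point needing care is that time exponent and query exponent correspond only on the exponential scale. This holds because the $o(n)$ term in the cost is uniform across draws, being part of the hypothesis, so the infimum is attained in the limit and equals $\tau_*+K_\gamma^*$.

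\textbf{Comparison with the benchmark.} Comparing $\tau_*+K_\gamma^*$ with $T_\gamma$ from Corollary \ref{cor:smoothing-comparison} is just subtraction, which gives \eqref{eq:sampler-budget}.

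\textbf{Batch version.} Apply \eqref{eq:batch-cost} with $\kappa=\kappa_*$, $b=b_*$, and $K_\gamma(e^{-1})=K_\gamma^*$; the latter holds because $e^{-1}$ is the minimizer in Theorem \ref{thm:raw-success}, or lies in the zero region when $\gamma\geq\sqrt e$. The derivation of \eqref{eq:batch-cost} is itself the two-sided argument above, applied with lists of exponent $\ell+\kappa_*$.

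\textbf{Approximate samplers.} This is the main obstacle, since the accuracy conditions are stated later in Section \ref{sec:query-accuracy}. I would require that the joint total variation error over all $e^{nr+o(n)}$ draws, or all $B_n$ batches, be $o(1)$; for batches the earlier bound $B_n\eta_n$ suffices. Success probability is a bounded functional of the joint output, so both limits are preserved. Errors of size $\exp(-\Omega(n^2))$, as in the cited samplers, satisfy this automatically.
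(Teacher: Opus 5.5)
Your proposal is correct and follows the paper's own argument. The paper also applies Theorem \ref{thm:raw-success} at $c=e^{-1}$, where $K_\gamma(e^{-1})=K_\gamma^*$, composes this with the per-draw cost and with \eqref{eq:batch-cost}, and uses strict inequalities to leave slack in the sample or batch exponent; your write-up simply spells out both directions in more detail. For the accuracy clause, note that the per-query condition $\epsilon_n=o(e^{-n(K_\gamma^*+\delta)})$ of Corollary \ref{cor:approx-raw} already gives your joint requirement, since the joint total variation error of $N_n$ queries is at most $N_n\epsilon_n\to0$ whenever the budget exponent satisfies $r<K_\gamma^*+\delta$, and only such budgets matter for the infimum.
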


\begin{proof}
Apply Theorem \ref{thm:raw-success} at $c=e^{-1}$ and the cost composition in Section \ref{sec:cost-composition}.
The batch statement follows from \eqref{eq:batch-cost}.
Strict inequalities permit a positive slack in the sample or batch exponent, giving success tending to one.
\end{proof}

For exact SVP, condition \eqref{eq:sampler-budget} becomes
$$
\frac{\tau_*}{\log2}<\frac12-\frac{1}{4e\log2}
=0.3673154\ldots.
$$
Table \ref{tab:search-costs} gives further examples.
The last column concerns improvement over the Pouly--Shen direct-selection benchmark specifically.
It is a generation cost target, not a construction meeting that target.

\begin{table}[ht]
\centering
\caption{Sample and time exponents in base two.
$K_\gamma^*$ is the optimal sample exponent, $L_\gamma$ is the sample exponent at the Pouly--Shen width, and $T_\gamma$ includes its batch cost.
The final column is the strict per-draw cost threshold in \eqref{eq:sampler-budget}.
Values are evaluations of the proved formulas.}
\label{tab:search-costs}
\begin{tabular}{rcccc}
\hline
$\gamma$ & $K_\gamma^*/\log2$ & $L_\gamma/\log2$ & $T_\gamma/\log2$ & $(T_\gamma-K_\gamma^*)/\log2$\\
\hline
1.0 & 0.26537 & 0.63268 & 0.63268 & 0.36732\\
1.1 & 0.18359 & 0.52304 & 0.52304 & 0.33945\\
1.2 & 0.11910 & 0.42803 & 0.50000 & 0.38090\\
1.4 & 0.03470 & 0.27463 & 0.50000 & 0.46530\\
\hline
\end{tabular}
\end{table}

The distinction between these columns matters for algorithm design.
Theorem \ref{thm:adaptive-oracle} rules out reducing the optimal sample exponent by changing widths alone.
A faster implementation must also control generation cost; combining vectors, as in sieving, changes the search model and can exploit information beyond the event that one sample is already short.

\subsection{Later sampling and SVP algorithms}
\label{sec:later-algorithms}

Kim \cite[Theorem 4.3]{Kim26DGS} gives an arbitrary-width one-sample DGS algorithm with expected time $2^{n/2+o(n)}$ and statistical error $\exp(-\Omega(n^3))$, for rational bases and positive rational squared widths.
Under the cost composition above, independent calls at a width with $c_n\to e^{-1}$ have achievable expected time exponents arbitrarily close to
$$
\frac12+\frac{K_\gamma^*}{\log2}
$$
in base two.
For exact SVP this is $0.7653689\ldots$, so this particular independent-call use does not improve the Pouly--Shen benchmark.
This computation concerns the black-box use of the one-sample interface, not the other SVP constructions in \cite{Kim26DGS} or all possible uses of its internal lists.
Rational squared widths with vanishing relative error preserve the rate by Lemma \ref{lem:uniform-raw}.

Hhan \cite[Theorem 4.1]{Hhan26Coset} reports a $2^{n/2+o(n)}$ time exact-SVP algorithm using coset difference trees and Gaussian statistics.
This preprint gives a separate comparison point: combining samples and recovering a vector from their statistics does not require a successful ordinary draw on the original lattice.
To beat that reported time exponent by independent direct draws at the sample optimal width, the cost model of Corollary \ref{cor:sampler-budget} instead requires
\begin{equation}
\frac{\tau_*}{\log2}<\frac12-\frac{K_\gamma^*}{\log2}.
\label{eq:later-cost-target}
\end{equation}
For exact SVP the right-hand side is $0.2346311\ldots$.
Equations \eqref{eq:sampler-budget} and \eqref{eq:later-cost-target} answer different benchmark questions.
Neither later algorithm changes our lower bound, which allows primitive reduction of each draw but does not allow combining distinct sampled vectors or replacing the queried lattice.

\begin{table}[ht]
\centering
\caption{Exact-SVP comparison points, in base-two dimension exponents.
Costs retain the input and expectation conventions discussed in this section.}
\label{tab:benchmark-roles}
\small
\begin{tabular}{@{}p{0.34\linewidth}p{0.16\linewidth}p{0.43\linewidth}@{}}
\hline
Mechanism & Time exponent & Role of the sample bound\\
\hline
Pouly--Shen direct selection & $0.63268\ldots$ & Recovered at their smoothing based width.\\[3pt]
Independent calls to Kim's one-sample interface & $0.76537\ldots$ & Conditional cost composition at the sample optimal width.\\[3pt]
Hhan's coset difference tree & $0.5$ & Separate reported algorithm; uses information beyond direct selection.\\[3pt]
Direct draws at the optimal width & $t+0.26537\ldots$ & A per-draw cost exponent $t<0.23463\ldots$ is required to beat $0.5$.\\
\hline
\end{tabular}
\end{table}

\subsection{Accuracy required of the sampler}
\label{sec:query-accuracy}

\begin{cor}[Approximate adaptive queries]
\label{cor:approx-raw}
In Theorem \ref{thm:adaptive-oracle}, replace each exact conditional query law by a law on $L_n$ at total variation distance at most $\epsilon_n$ from $D_{L_n,s_n(c_j)}$.
Assume this bound holds for every query history on a common set of lattices of probability tending to one.
If, for some $\delta>0$,
$$
\epsilon_n=o(e^{-n(K_\gamma^*+\delta)}),
$$
then both sample count conclusions of that theorem remain valid.
For a fixed $c$, the analogous condition with $K_\gamma(c)$ preserves the exponent and the threshold in Theorem \ref{thm:raw-success}.
\end{cor}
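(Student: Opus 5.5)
We plan to couple the approximate procedure with the exact one query by query, so that each query contributes at most $\epsilon_n$ to the total variation between the two joint laws. Fix a lattice $L$ in the common good set on which the $\epsilon_n$ bound holds for every history. Construct the two processes on one probability space. At query $j$, given identical histories so far, both use the same width $c_j$, which is a measurable function of the history and the internal randomness. A maximal coupling of the approximate query law with $D_{L,s_n(c_j)}$ then makes the two outputs differ with conditional probability at most $\epsilon_n$. By induction, the full transcripts, and hence the returned vectors (after the same optional $P_L$), agree except on an event of probability at most $N_n\epsilon_n$. Consequently the joint success probabilities of the two procedures differ by at most $N_n\epsilon_n$, conditional on $L$.

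For the converse direction, take $r<K_\gamma^*$. Theorem \ref{thm:adaptive-oracle} gives exact success tending to zero. The approximation adds at most
$$N_n\epsilon_n=e^{n(r+o(1))}\,o(e^{-n(K_\gamma^*+\delta)}),$$
which tends to zero because $r<K_\gamma^*$. For the attainment direction, take $r>K_\gamma^*$ and let the fixed-width procedure at $c=e^{-1}$ use a reduced budget. Choose $N'_n\le N_n$ with $n^{-1}\log N'_n\to r'$, where $K_\gamma^*<r'<\min\{r,K_\gamma^*+\delta\}$. The exact procedure then succeeds with probability tending to one by Theorem \ref{thm:raw-success}. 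The coupling error is $N'_n\epsilon_n=e^{n(r'+o(1))}o(e^{-n(K_\gamma^*+\delta)})\to0$. A procedure using at most $N_n$ queries is allowed, so this budget reduction is legitimate. Finally, average over lattices: the complement of the good set, and of the good set in Theorem \ref{thm:adaptive-oracle}, has probability tending to zero, so both limits also hold for the joint probability.

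The fixed-$c$ statement follows by the same coupling with $K_\gamma(c)$ in place of $K_\gamma^*$, using the threshold in Theorem \ref{thm:raw-success}. The exponent itself is preserved for the same reason. The single-draw success probability changes by at most $\epsilon_n$, which is $o(e^{-n(K_\gamma(c)+\delta)})$, while the exact probability is $e^{-n(K_\gamma(c)+o_{\pr}(1))}$. The additive error is therefore relatively negligible in probability.

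The main obstacle is the adaptive coupling. Because widths depend on earlier outputs, the $N_n\epsilon_n$ bound must be justified by sequential maximal coupling with measurable selection of the coupling kernels, or equivalently by the chain rule for total variation of joint laws. We would present it through the chain-rule inequality. The joint laws are built from the same history-dependent width maps, and the telescoping bound is
$$\TV(\text{joint})\le\sum_j\ex\,\TV(\text{query }j\text{ kernels}).$$
This avoids constructing explicit couplings. A second point to watch is that the attainment direction truly needs the slack $\delta$, which is why we reduce the query budget rather than use all $N_n$ queries.
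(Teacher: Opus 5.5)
Your proof is correct, but it is organized differently from the paper's. You compare whole transcripts. The hybrid (chain-rule) bound for total variation gives a difference of at most $N_n\epsilon_n$ between approximate and exact joint success on the good lattice set, and you then use Theorem \ref{thm:adaptive-oracle} and Theorem \ref{thm:raw-success} as black boxes.

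The paper never couples transcripts. It only observes that, for every history, the conditional probability that query $j$ succeeds (with or without $P_{L_n}$) is within $\epsilon_n$ of $u_{n,\gamma,c_j}(L_n)$ or $\widehat u_{n,\gamma,c_j}(L_n)$.
\begin{itemize}
\item The converse is then the same union bound as in the exact proof, $N_n(\sup_{c>0}\widehat u_{n,\gamma,c}+\epsilon_n)$.
\item For attainment it takes $0<\xi<\min\{\delta,r-K_\gamma^*\}$. It uses the per-query lower bound $e^{-n(K_\gamma^*+\xi)}-\epsilon_n\geq\frac12 e^{-n(K_\gamma^*+\xi)}$ on typical lattices, and iterates conditional failure probabilities to get $\exp\{-N_ne^{-n(K_\gamma^*+\xi)}/2\}$.
\end{itemize}
Because $\epsilon_n$ is compared only with the single-draw success probability, the paper's route uses the full budget $N_n$ for every $r>K_\gamma^*$ and never needs a reduced budget $N_n'$. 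It also sidesteps the measurability questions you flag for adaptive couplings, and it has the same form as the finite bounds in Corollary \ref{cor:finite-query-budget}.

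Your route buys modularity. The transcript bound controls every event at once, so any selection rule transfers without re-deriving per-query estimates. The price is that the error accumulates additively, which is exactly why you must cut the budget to $r'<K_\gamma^*+\delta$. That step is sound. It also yields the literal full-budget procedure: its first $N_n'$ queries form your reduced procedure, and success of the shortest nonzero sample (even after $P_{L_n}$, which cannot increase norms) is monotone in the set of samples. It is worth stating this explicitly rather than relying only on the fact that a smaller budget is allowed.
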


\begin{proof}
For unmodified or primitive-reduced output, the conditional success probability differs from $u_{n,\gamma,c_j}(L)$ or $\widehat u_{n,\gamma,c_j}(L)$ by at most $\epsilon_n$.
For the lower bound, the union bound is at most $N_n(\sup_{c>0}\widehat u_{n,\gamma,c}+\epsilon_n)$.
For achievability at $c=e^{-1}$, choose $0<\xi<\min\{\delta,r-K_\gamma^*\}$.
On typical lattices, every conditional success probability is at least
$$
e^{-n(K_\gamma^*+\xi)}-\epsilon_n
\geq\tfrac12 e^{-n(K_\gamma^*+\xi)}.
$$
Iteration of conditional failure probabilities gives an upper bound
$\exp\{-N_ne^{-n(K_\gamma^*+\xi)}/2\}$, which tends to zero.
For fixed $c$, the error divided by the ideal success mass tends to zero on typical lattices, so each conditional success mass has logarithmic rate $K_\gamma(c)$.
The same failure bounds prove its sample count threshold.
\end{proof}

This assumption controls the law after conditioning on previous outputs; accurate marginal laws alone do not suffice.
For example, repeating one exact Gaussian sample gives exact marginals but no gain from repetition.
Alternatively, if the joint law of an entire list is within $\eta_n=o(1)$ of independent exact draws, the list success probability differs by at most $\eta_n$.
Thus a bound on the joint law can suffice even when a bound on each separate marginal would be too weak.

\section{Finite query guarantees}
\label{sec:calibration}

The finite converse in Theorem \ref{thm:finite-converse} bounds the success of every allowed width policy.
We pair it with a finite attainment bound at a specified width, counting ordinary Gaussian draws including zero.
Both statements apply to exact SVP and fixed approximation factors.

\begin{thm}[Finite ordinary Gaussian success bounds]
\label{thm:finite-raw}
Let $n\geq3$, $c>0$, and $0<\eta<1$.
Put
\begin{equation}
A=\Gamma(n/2+1)\left(\frac2{cn}\right)^{n/2},
\qquad D=1+A/\eta.
\label{eq:finite-denominator}
\end{equation}
The following statements concern $L\sim\mu_n$.
\begin{enumerate}
\item For exact SVP, choose $R>1$ and define
\begin{equation}
q_{\rm ex}=\min\{1,2e^{-cnR^2/2}/D\},\qquad
E_{\rm ex}=\eta+2\zeta(n)R^{-n}.
\label{eq:finite-exact}
\end{equation}
Outside a lattice set of probability at most $E_{\rm ex}$, one ordinary Gaussian draw has success probability at least $q_{\rm ex}$.
\item For $\gamma>1$, choose $1<\rho<\gamma$ and $0<h<1$, and put
\begin{eqnarray}
m_\rho&=&\frac{\rho^n-1}{2\zeta(n)},\nonumber\\
q_{\rm ap}&=&\min\{1,2(1-h)m_\rho e^{-cn\rho^2/2}/D\},\nonumber\\
E_{\rm ap}&=&\eta+\frac1{h^2m_\rho}
+\frac{(\rho/\gamma)^n}{2\zeta(n)}.
\label{eq:finite-approx}
\end{eqnarray}
Outside a lattice set of probability at most $E_{\rm ap}$, one ordinary Gaussian draw has $\gamma$ approximate success probability at least $q_{\rm ap}$.
\end{enumerate}
Primitive reduction preserves both lower bounds.
Bounds with $E\geq1$ are valid but uninformative.
\end{thm}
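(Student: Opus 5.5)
The plan is to bound the success probability $u=S/\theta_L(s)$ by controlling the numerator from below and the denominator from above separately. Here $s=s_n(c)$, so that $e^{-\pi\|v\|^2/s^2}=e^{-\beta_n(c)\|v\|^2}=e^{-cn\|v\|^2/(2\GH_n^2)}$, and $S$ is the Gaussian weight of the successful nonzero vectors.

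For the denominator, split $\theta_L(s)=1+\sum_{v\neq0}e^{-\pi\|v\|^2/s^2}$. Decompose each nonzero vector as $mw$ with $w$ primitive and $m\ge1$. The expected nonzero mass is then
\[
\ex\bigl(\theta_L(s)-1\bigr)=\int_{\re^n}e^{-\pi\|x\|^2/s^2}\,dx=s^n,
\]
using Lemma~\ref{lem:primitive-moments} summed over $m$, exactly as in the computation of $\ex C_{n,R}$ in the proof of Theorem~\ref{thm:raw-success}. A direct Gamma-function computation gives $s_n(c)^n=\Gamma(n/2+1)(2/(cn))^{n/2}=A$. Markov's inequality then shows that $\theta_L(s)\le 1+A/\eta=D$ outside a set of probability at most $\eta$. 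This is the common term $\eta$ in both $E_{\rm ex}$ and $E_{\rm ap}$.

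For the exact SVP numerator, Lemma~\ref{lem:shortest-bounds} shows that $\lambda_1(L)\le R\GH_n$ outside probability $2\zeta(n)R^{-n}$. On that event the two shortest vectors $\pm v$ contribute $S\ge 2e^{-cn\lambda_1^2/(2\GH_n^2)}\ge 2e^{-cnR^2/2}$. Dividing by $D$ and noting that $u\le 1$ gives $q_{\rm ex}$, and a union bound gives $E_{\rm ex}$.

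For $\gamma>1$, let $C$ be the number of primitive sign classes with $\GH_n<\|u\|\le\rho\GH_n$. By Lemma~\ref{lem:primitive-moments} applied to the indicator of this annulus, $C$ has mean and variance both equal to $(\rho^n-1)/(2\zeta(n))=m_\rho$. Chebyshev's inequality gives $C\ge(1-h)m_\rho$ outside probability $1/(h^2m_\rho)$. Next, Lemma~\ref{lem:shortest-bounds} with $r=\rho/\gamma<1$ gives $\lambda_1\ge(\rho/\gamma)\GH_n$ outside probability $(\rho/\gamma)^n/(2\zeta(n))$. On that event every vector of norm at most $\rho\GH_n$ has norm at most $\gamma\lambda_1$, so it is successful. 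Counting both signs, each such class has weight at least $e^{-cn\rho^2/2}$, which gives $S\ge 2(1-h)m_\rho e^{-cn\rho^2/2}$. Dividing by $D$ and applying a three-term union bound yields $q_{\rm ap}$ and $E_{\rm ap}$.

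Finally, the lower bound in \eqref{eq:primitive-reduction-relative} shows that primitive reduction can only increase the success probability, so both bounds persist.

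The only step needing care is the denominator. I would make sure the moment identity covers all nonzero multiples (not only primitive vectors), which is why the expectation comes out to exactly $s^n=A$ via $\sum_m m^{-n}=\zeta(n)$. I would also confirm that the integrability hypotheses of Lemma~\ref{lem:primitive-moments} hold for the Gaussian, so that Tonelli's theorem justifies the interchange.
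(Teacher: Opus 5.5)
Your proposal is correct and follows the paper's proof essentially step for step. It bounds the denominator by Markov's inequality applied to $\ex(\Theta_n(c,L)-1)=s_n(c)^n=A$, obtained by summing primitive multiples. The exact case uses the two shortest vectors on $\lambda_1(L)\le R\GH_n$. The approximate case uses Chebyshev's inequality on the primitive annulus count together with $\lambda_1(L)\ge(\rho/\gamma)\GH_n$, and a union bound finishes both. Your appeal to the lower inequality in \eqref{eq:primitive-reduction-relative} for primitive reduction is the same observation as the paper's remark that reduction cannot increase a nonzero vector's norm.
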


\begin{proof}
The first moment identity, after summing primitive multiples, gives
$\ex(\Theta_n(c,L)-1)=s_n(c)^n=A$.
Markov's inequality therefore gives $\Theta_n(c,L)\leq D$ outside a set of probability at most $\eta$.
For exact SVP, Lemma \ref{lem:shortest-bounds} gives $\lambda_1(L)\leq R\GH_n$ outside a set of probability at most $2\zeta(n)R^{-n}$.
On this event the two signs of a shortest vector contribute at least $2e^{-cnR^2/2}$.

For approximate SVP, count primitive sign classes in the annulus
$\GH_n<\|v\|\leq\rho\GH_n$.
Its mean and variance are both $m_\rho$ by Lemma \ref{lem:primitive-moments}.
Chebyshev's inequality gives at least $(1-h)m_\rho$ classes outside a set of probability at most $1/(h^2m_\rho)$.
Both signs of each contribute at least $2e^{-cn\rho^2/2}$.
On $\lambda_1(L)\geq(\rho/\gamma)\GH_n$ they are successful; Lemma \ref{lem:shortest-bounds} bounds the remaining exceptional probability.
Divide each numerator bound by $D$ and use the union bound.
Primitive reduction cannot increase a nonzero vector's norm.
\end{proof}

\begin{cor}[Query budget and sampling error]
\label{cor:finite-query-budget}
Let $(q,E)$ be either pair in Theorem \ref{thm:finite-raw}.
Suppose that, outside another lattice set of probability at most $\omega$, every query law conditional on the past is supported on $L$ and is within total variation $\epsilon<q$ of $D_{L,s_n(c)}$.
For $N$ queries, returning the shortest nonzero sample, with optional primitive reduction, succeeds conditionally on every lattice in the common good set with probability at least
\begin{equation}
1-e^{-N(q-\epsilon)}.
\label{eq:finite-query-success}
\end{equation}
The joint success probability is at least
$\max\{0,1-E-\omega\}(1-e^{-N(q-\epsilon)})$.
In particular, $N\geq\log(1/\delta)/(q-\epsilon)$ guarantees conditional success at least $1-\delta$ there.
\end{cor}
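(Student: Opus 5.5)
The plan is to work on the intersection $\mathcal G$ of the good set from Theorem \ref{thm:finite-raw} and the set where the approximate query laws are accurate. This set has probability at least $1-E-\omega$ by the union bound. On $\mathcal G$ we show a per-query conditional success bound and then iterate it.

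First, fix a lattice $L\in\mathcal G$ and a query index $j$, and condition on the full history $\mathcal F_{j-1}$. Let $S$ be the success event for a single draw: $0<\|V\|\leq\gamma\lambda_1(L)$, or $\|V\|=\lambda_1(L)$ in the exact case. Under the ideal law $D_{L,s_n(c)}$, Theorem \ref{thm:finite-raw} gives $\pr(S)\geq q$. Since the actual conditional law is within total variation $\epsilon$ of the ideal law, we get $\pr(V_j\in S\mid\mathcal F_{j-1})\geq q-\epsilon>0$. If instead we use the primitive-reduced event $\{0<\|P_L(V_j)\|\leq\gamma\lambda_1(L)\}$, it contains $S$, because $\|P_L(v)\|\leq\|v\|$. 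So the same bound holds, and total variation is applied directly to the larger event.

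Next, we iterate conditional failures. The event that no draw among the first $j$ lies in $S$ is $\mathcal F_j$-measurable. Conditioning successively and using $1-x\leq e^{-x}$ gives
$$
\pr(\text{no }V_j\in S,\ j\leq N\mid L)\leq(1-(q-\epsilon))^N\leq e^{-N(q-\epsilon)}.
$$
If some sample lies in $S$, the shortest nonzero sample also satisfies the length bound, since its norm is at most that sample's norm. With primitive reduction applied to the returned shortest sample, its norm can only decrease. Either way the output succeeds, which gives \eqref{eq:finite-query-success}. One subtlety is that the shortest sample, and not its primitive part, is what gets compared. Returning the sample of least reduced norm is also fine, since that norm is at most the reduced norm of any successful sample.

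Then we integrate over lattices. The joint probability is at least $\pr(\mathcal G)$ times the conditional bound, and $\pr(\mathcal G)\geq\max\{0,1-E-\omega\}$. The budget statement follows by solving $e^{-N(q-\epsilon)}\leq\delta$ for $N$.

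The only delicate point is handling dependence: the total variation assumption must hold conditionally on each history, as stated. Given that, the iteration is routine.
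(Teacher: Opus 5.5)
Your proposal is correct and follows the paper's proof: total variation gives a per-query conditional success bound of $q-\epsilon$ on the common good set, iterating conditional failures gives $(1-q+\epsilon)^N\leq e^{-N(q-\epsilon)}$, and averaging over a lattice set of probability at least $\max\{0,1-E-\omega\}$ gives the joint bound. You also spell out a point the paper leaves implicit: why the shortest nonzero sample, with or without primitive reduction, succeeds whenever some draw does. That step uses the assumption that query laws are supported on $L$, since this is what makes the returned vector a nonzero lattice vector and makes $P_L$ well defined.
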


\begin{proof}
Every conditional success probability is at least $q-\epsilon$.
Iteration bounds the probability of no successful draw by $(1-q+\epsilon)^N\leq e^{-N(q-\epsilon)}$.
Average over the common lattice set, whose probability is at least $\max\{0,1-E-\omega\}$.
\end{proof}

\begin{cor}[Finite adaptive query converse]
\label{cor:finite-query-converse}
Let $(U,E_+)$ be as in Theorem \ref{thm:finite-converse}.
Consider at most $N$ queries in the direct search model of Theorem \ref{thm:adaptive-oracle}.
Outside an additional lattice set of probability at most $\omega$, suppose every conditional query law is within total variation $\epsilon$ of the chosen ordinary Gaussian law.
Then its joint success probability, with or without primitive reduction, is at most
\begin{equation}
\min\{1,E_++\omega+N(U+\epsilon)\}.
\label{eq:finite-query-converse}
\end{equation}
In particular, success at least $1-\delta$ requires
$N\geq(1-\delta-E_+-\omega)/(U+\epsilon)$ whenever the numerator is positive.
\end{cor}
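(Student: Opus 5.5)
We prove Corollary \ref{cor:finite-query-converse} by pairing the per-query envelope of Theorem \ref{thm:finite-converse} with a union bound over queries. Let $G$ be the intersection of two lattice sets: the set on which $\sup_{c>0}\widehat u_{n,\gamma,c}(L)\leq U$, and the set on which every conditional query law is within total variation $\epsilon$ of the chosen ordinary Gaussian. By \eqref{eq:finite-converse} and the stated assumption, $\pr\{L\notin G\}\leq E_++\omega$. On $G^c$ we use only the trivial bound $1$ on the conditional success probability. Integrating over lattices then gives
$$
\pr\{\hbox{success}\}\leq E_++\omega+\ex\bigl[\mathbf 1_G\,\pr\{\hbox{success}\mid L\}\bigr].
$$

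Fix $L\in G$. The procedure outputs one of its nonzero samples, possibly reduced by $P_L$. It can therefore succeed only if some query $j\leq N$ returns a vector $V_j$ that is successful in the sense used by the procedure: $0<\|V_j\|\leq\gamma\lambda_1(L)$, or $0<\|P_L(V_j)\|\leq\gamma\lambda_1(L)$ under reduction. Since the reduced target contains the unreduced one, it suffices to bound the reduced event. By the union bound, the conditional success probability is at most $\sum_{j\leq N}\pr\{V_j\ \hbox{successful}\mid L\}$.

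Condition further on the history $\mathcal F_{j-1}$, which consists of the internal randomness and the previous outputs. This determines $c_j$, and the conditional law of $V_j$ is within total variation $\epsilon$ of $D_{L,s_n(c_j)}$. The successful set is a fixed deterministic subset of $L$ once $L$ is fixed, so the conditional probability of hitting it is at most $\widehat u_{n,\gamma,c_j}(L)+\epsilon\leq U+\epsilon$. Because this bound is uniform in $c_j$, it holds for every history. Taking expectations over $\mathcal F_{j-1}$ gives the same bound for each $j$, and summing over $j$ gives $N(U+\epsilon)$. If fewer than $N$ queries are made, the bound only decreases. Combining this with the first display and noting that a probability is at most $1$ yields \eqref{eq:finite-query-converse}.

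For the requirement on $N$, suppose the success probability is at least $1-\delta$. Then $1-\delta\leq E_++\omega+N(U+\epsilon)$, and rearranging gives the stated lower bound on $N$ whenever $1-\delta-E_+-\omega>0$.

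The only delicate point is measurability. The width $c_j$ is random and adaptive, so the bound cannot be applied at a fixed width. This is handled because Theorem \ref{thm:finite-converse} controls the supremum over all $c>0$ simultaneously on a single lattice set, so no mesh over widths and no further union over widths is needed. The total variation bound likewise holds for every history on the good set, by assumption.
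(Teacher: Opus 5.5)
Your proposal is correct and follows the paper's own argument. On the intersection of the good set from Theorem \ref{thm:finite-converse} with the accuracy good set, each query's conditional success probability is at most $U+\epsilon$ uniformly over histories and widths; a union bound over the at most $N$ queries gives $N(U+\epsilon)$, and the excluded lattices contribute at most $E_++\omega$. Your extra details, namely passing to the primitive-reduced target via $\|P_L(v)\|\leq\|v\|$ and conditioning on $\mathcal F_{j-1}$, only make explicit what the paper's two-line proof leaves implicit.
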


\begin{proof}
On the common good lattice set, every query has conditional success probability at most $U+\epsilon$, for every history and width.
The union bound gives $N(U+\epsilon)$ there; the excluded probability is at most $E_++\omega$.
\end{proof}

\paragraph{Relation to the optimal exponent.}
For $c=e^{-1}$, $A=\sqrt{\pi n}\{1+O(n^{-1})\}$.
In the exact bound, take $\eta_n=n^{-1}$ and $R_n=n^{1/n}$.
Then $E_{\rm ex}\to0$ and $-n^{-1}\log q_{\rm ex}\to1/(2e)$.
For fixed $1<\gamma<\sqrt e$, take $\eta_n=n^{-1}$, $h=1/4$, and $\rho_n=\gamma n^{-1/n}$, which is greater than one for large $n$.
Then $E_{\rm ap}\to0$ and
$$
-\frac1n\log q_{\rm ap}\to\frac{\gamma^2}{2e}-\log\gamma=K_\gamma^*.
$$
Thus these finite bounds recover the attaining exponent in the rare success regime.
Corollary \ref{cor:finite-converse-rate} gives the same limiting exponent for the finite converse, so the two finite query bounds match on the exponential scale.

\begin{table}[t]
\centering
\caption{Finite guarantees for ordinary Gaussian draws at $c=e^{-1}$.
The formulas give $q\geq2^{-a}$ outside a lattice set of probability at most the displayed $E$, rounded upward.
With $N=2^{a+3}$ and conditional query error at most $2^{-a-7}$ on every lattice, success exceeds $0.999$ on the good set.
The last column has $U\leq2^{-b}$ for the all-width converse, with $E_+<0.00351$.
These are sample budgets, excluding generation cost.}
\label{tab:finite-bounds}
\begin{tabular}{rrrrrr}
\hline
$n$ & $\gamma$ & $a$ & $\log_2N$ & $E$ & $b$\\
\hline
100 & 1.0 & 44 & 47 & 0.00301 & 11\\
300 & 1.0 & 97 & 100 & 0.00301 & 65\\
500 & 1.0 & 151 & 154 & 0.00301 & 118\\
100 & 1.2 & 31 & 34 & 0.00289 & 0\\
300 & 1.1 & 76 & 79 & 0.00251 & 41\\
300 & 1.2 & 55 & 58 & 0.00251 & 23\\
500 & 1.2 & 79 & 82 & 0.00251 & 47\\
500 & 1.4 & 35 & 38 & 0.00251 & 7\\
\hline
\end{tabular}
\end{table}

Table \ref{tab:finite-bounds} uses $\eta=0.002$ and $c=e^{-1}$ throughout.
For exact SVP, $R=2000^{1/n}$; for approximate SVP, $h=1/4$ and $\rho=\gamma/1000^{1/n}$.
The integer $a=\lceil-\log_2q\rceil$ rounds the success lower bound downward.
The stated error and budget give $N(q-\epsilon)\geq8-1/16$, so \eqref{eq:finite-query-success} exceeds $0.999$.
For example, $n=500$ and $\gamma=1.2$ give joint success greater than $0.996$ under the table's accuracy assumption.
The finite overhead over $nK_\gamma^*/\log2$ pays for the prescribed exceptional probability, conservative moment bounds, and amplification.
For the converse column take $\eta=0.002$, $\kappa=1000^{-1/n}$, and $R=2000^{1/n}$ in Theorem \ref{thm:finite-converse}, and put $b=\lfloor-\log_2U\rfloor$.
Its geometric hypotheses hold for every listed pair.
For exact queries, any adaptive procedure with joint success at least $0.99$ must have $N>0.98649\,2^b$ by \eqref{eq:finite-query-converse}.
The row with $b=0$ gives only a trivial lower bound.
For $n=500$ and $\gamma=1.2$, the certificates thus give a necessary budget above $0.98649\,2^{47}$ and a sufficient budget $2^{82}$, with their respective stated accuracy conditions.
The finite gap is retained explicitly; the two bounds agree only on the exponential scale as $n\to\infty$.

\section{Conclusions and further questions}
\label{sec:conclusion}

Direct Gaussian selection has a sharp sample exponent on Haar random lattices.
Choosing the width from the full history and reducing each sampled vector to its primitive part do not lower this exponent.
The balanced-width comparison of ADRS explains the unmodified selection problem.
The explicit Haar rate and its preservation under primitive reduction quantify the limitation for the larger output model.
The finite converse separates direct hits from recovery through multiples, and the finite attainment bound counts the same Gaussian queries.

Generation costs remain a separate design question.
The cost formulas distinguish improvement over the Pouly--Shen direct-selection implementation from competition with later algorithms that combine Gaussian information.
Further questions include the subexponential factors at the threshold, samplers whose amortized costs meet these benchmarks, and other forms of preprocessing that preserve or change the direct-selection bound.
Extending the result to another lattice ensemble requires estimates for the successful numerator and the full Gaussian denominator on a common typical set.

\section*{Data availability}
The values in Tables \ref{tab:search-costs} and \ref{tab:finite-bounds} and Figure \ref{fig:draw-rates} are evaluations of the displayed formulas.
No lattice sample data are used.
The source includes reproduction routines for both sides of the finite query table.

\section*{Acknowledgments}

This work was supported by JSPS KAKENHI Grant Number 25K07752.

We used ChatGPT (OpenAI) as a research and writing aid during manuscript revision.
In particular, it assisted in improving the exposition, formulating and
drafting an initial version of the finite-dimensional converse argument in Section~\ref{sec:finite-converse} 
and the associated query bound, and generating code for numerical evaluation.
The author checked and revised all mathematical arguments and verified the numerical computations, 
and takes full responsibility for the content of the manuscript.

\label{paper:body-end}


\begin{thebibliography}{99}

\bibitem{Sodergren11}
A. S\"odergren,
On the Poisson distribution of lengths of lattice vectors in a random
lattice,
Mathematische Zeitschrift 269 (2011), no. 3--4, 945--954.
\url{https://doi.org/10.1007/s00209-010-0772-8}

\bibitem{Laarhoven26Spherical}
T. Laarhoven,
Spherical statistics and phase transitions in high-dimensional Random lattices,
arXiv:2609.13635, version 1, 2026.
\url{https://arxiv.org/abs/2609.13635}

\bibitem{Ajtai96}
M. Ajtai,
Generating hard instances of lattice problems,
in Proceedings of the 28th ACM Symposium on Theory of Computing,
ACM, New York, 1996, pp. 99--108.
\url{https://doi.org/10.1145/237814.237838}

\bibitem{MicciancioGoldwasser02}
D. Micciancio and S. Goldwasser,
Complexity of Lattice Problems: A Cryptographic Perspective,
The Kluwer International Series in Engineering and Computer Science, vol. 671,
Kluwer Academic Publishers, Boston, MA, 2002.
\url{https://doi.org/10.1007/978-1-4615-0897-7}

\bibitem{MicciancioRegev07}
D. Micciancio and O. Regev,
Worst--case to average--case reductions based on Gaussian measures,
SIAM Journal on Computing 37 (2007), no. 1, 267--302.
\url{https://doi.org/10.1137/S0097539705447360}

\bibitem{Regev09}
O. Regev,
On lattices, learning with errors, random linear codes, and cryptography,
Journal of the ACM 56 (2009), no. 6, Article 34, 40 pp.
\url{https://doi.org/10.1145/1568318.1568324}

\bibitem{Peikert16}
C. Peikert,
A decade of lattice cryptography,
Foundations and Trends in Theoretical Computer Science 10 (2016), no. 4,
283--424.
\url{https://doi.org/10.1561/0400000074}

\bibitem{AjtaiKumarSivakumar01}
M. Ajtai, R. Kumar, and D. Sivakumar,
A sieve algorithm for the shortest lattice vector problem,
in Proceedings of the 33rd ACM Symposium on Theory of Computing,
ACM, New York, 2001, pp. 601--610.
\url{https://doi.org/10.1145/380752.380857}

% After acceptance, replace the arXiv-only entry below by the Physica Scripta bibliographic data.
\bibitem{KaminagaThermal}
M. Kaminaga,
Thermal concentration and Poisson--Dirichlet edge statistics for
random--lattice Gibbs ensembles,
arXiv:2607.00311, version 2, 2026.
\url{https://arxiv.org/abs/2607.00311v2}


\bibitem{Rogers55}
C. A. Rogers,
Mean values over the space of lattices,
Acta Mathematica 94 (1955), 249--287.
\url{https://doi.org/10.1007/BF02392493}

\bibitem{ADRS15}
D. Aggarwal, D. Dadush, O. Regev, and N. Stephens-Davidowitz,
Solving the shortest vector problem in $2^n$ time using discrete Gaussian sampling: Extended abstract,
in Proceedings of the 47th ACM Symposium on Theory of Computing,
2015, pp. 733--742.
\url{https://doi.org/10.1145/2746539.2746606}
Full version: arXiv:1412.7994v5.
\url{https://arxiv.org/abs/1412.7994v5}

\bibitem{PoulyShen26}
A. Pouly and Y. Shen,
Solving the shortest vector problem in $2^{0.63269n+o(n)}$ time on random lattices,
in Advances in Cryptology, EUROCRYPT 2026, Part IV,
Lecture Notes in Computer Science 16544, Springer, 2026, pp. 92--123.
\url{https://doi.org/10.1007/978-3-032-25327-9_4}
Full version: Cryptology ePrint Archive, Paper 2024/1805.
\url{https://eprint.iacr.org/2024/1805}
Authors' EUROCRYPT 2026 slides:
\url{https://www.pouly.fr/data/slides/eurocrypt2026.pdf}

\bibitem{Kim26DGS}
J. Kim,
One discrete Gaussian sample in $2^{n/2+o(n)}$ time,
arXiv:2608.03220, version 1, 2026.
\url{https://arxiv.org/abs/2608.03220v1}

\bibitem{Hhan26Coset}
M. Hhan,
Finding a shortest vector and more in $2^{n/2+o(n)}$ time using $q$-ary coset difference tree,
arXiv:2609.02764, version 1, 2026.
\url{https://arxiv.org/abs/2609.02764v1}
Cryptology ePrint Archive, Paper 2026/1859.
\url{https://eprint.iacr.org/2026/1859}

\bibitem{SD16}
N. Stephens-Davidowitz,
Discrete Gaussian sampling reduces to CVP and SVP,
in Proceedings of the 27th Annual ACM--SIAM Symposium on Discrete Algorithms,
2016, pp. 1748--1764.
\url{https://doi.org/10.1137/1.9781611974331.ch121}

\end{thebibliography}
\end{document}